\newtheorem*{definition}{Definition}
\newtheorem*{lemma}{Lemma}
\newtheorem*{theorem}{Theorem}
\begin{document}
\title{The Sum of Four Squares Over Real Quadratic Number Fields}
\author{Katherine Thompson}
\maketitle
This paper concerns the representation of totally positive integers in real quadratic number fields as the sum of four squares. First we show that the sum of four squares is only universal over $\mathbb Q(\sqrt{5})$, providing a proof alternative to G\"otzky on a formula for the exact number of such representations. The remainder of this work provides general upper and lower bounds on the Eisenstein coefficients of the associated theta series. 
Concretely, we can show that in all but two number fields there is a nontrivial cusp form in the decomposition of the theta series. Multiple concrete examples of the theta series decomposition are provided; specifically, we provide an {exact formula for the number of representations of a locally represented totally positive integer as a sum of four squares over $\mathbb Q(\sqrt{2})$}. We also provide the theta series decomposition for the sum of four squares over $\mathbb Q(\sqrt{3})$, $\mathbb Q (\sqrt{13})$ and $\mathbb Q (\sqrt{17})$.

\section{Introduction}
Throughout, let $K$ be a number field ($K= \mathbb Q$ or $K$ real quadratic) with $\mathcal O_K$ its ring of integers. An \textbf{n-ary quadratic form over $\mathcal O_K$} is a form $$Q(\vec{x})= Q(x_1,...,x_n) = \displaystyle\sum_{1 \leq i \leq j \leq n} a_{ij}x_ix_j \in \mathcal O_K [x_1,...,x_n].$$
\begin{definition}
Let $Q: \mathcal O_K^n \to \mathcal O_K$ be a quadratic form over the ring of integers of a totally real number field $K$. Suppose $[K:\mathbb Q] = 2$ and let $\sigma_1, \sigma_2$ denote the distinct embeddings $K \hookrightarrow \mathbb R$. We say that $Q$ is \textbf{(totally) positive definite} if the following conditions hold:
\begin{itemize}
\item[1.] $Q(\vec{x})=0$ if and only if $\vec{x}=0$.
\item[2.] For all $\vec{x} \neq 0$, $\sigma_i(Q(\vec{x})) >0$ for $i \in \{1,2\}$.
\end{itemize}
\end{definition}
We denote by $\mathcal O_K^{+}$ the totally positive elements of $\mathcal O_K$.
\begin{definition}
Let $Q(\vec{x})$ be an $n$-ary positive definite quadratic form over $\mathcal O_K$ and let $m \in \mathcal O_K^{+}$. We say that $Q$ \textbf{represents} $m$ if there exist $\vec{x} \in \mathcal O_K^n$ such that $Q(\vec{x}) = m$. Moreover, for fixed $Q$ and $m$ we define the \textbf{representation number of $m$ by $Q$} as $$r_Q(m) := \{ \vec{x} \in \mathcal O_K^n \vert Q(\vec{x}) = m \}.$$
Note that when $Q$ represents $m$, $r_Q(m) >0$.
\end{definition}
The question of which elements are represented by a fixed positive definite quadratic form has a deep and long history. Perhaps the first historically valued result is the following:
\begin{theorem}[Lagrange \cite{Lagrange}]
(1770) Every positive integer can be expressed as the sum of four squares.
\end{theorem}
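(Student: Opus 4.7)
The plan is to follow Lagrange's original strategy of reducing to primes via a multiplicative identity and then descending. The first step is to establish Euler's four-square identity, which asserts that a product of two sums of four integer squares is itself a sum of four integer squares; the identity is a straightforward (if tedious) polynomial verification. By this identity, together with the trivial observation that $1 = 1^2 + 0^2 + 0^2 + 0^2$ and $2 = 1^2 + 1^2 + 0^2 + 0^2$, it suffices to prove that every odd prime $p$ is a sum of four squares.

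Fix such a $p$. The second step is to produce some multiple $mp$, with $1 \le m < p$, which is a sum of four squares. A pigeonhole argument works: the sets
\[
A = \{x^2 \bmod p : 0 \le x \le (p-1)/2\}, \qquad B = \{-1 - y^2 \bmod p : 0 \le y \le (p-1)/2\}
\]
each have cardinality $(p+1)/2$, so they must intersect modulo $p$. This yields $x, y$ with $1 + x^2 + y^2 \equiv 0 \pmod{p}$, hence $x^2 + y^2 + 1^2 + 0^2 = mp$ for some $m$ satisfying $1 \le m < p$ (since each of $x,y$ can be chosen in absolute value at most $(p-1)/2$).

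The third and most delicate step is infinite descent. Let $m_0$ be the minimal positive integer such that $m_0 p = x_1^2 + x_2^2 + x_3^2 + x_4^2$ for some integers $x_i$, and assume for contradiction that $m_0 > 1$. If $m_0$ is even, then after reordering the $x_i$ we can assume $x_1 \equiv x_2$ and $x_3 \equiv x_4 \pmod{2}$, whereupon
\[
\frac{m_0}{2} p = \left(\frac{x_1+x_2}{2}\right)^2 + \left(\frac{x_1-x_2}{2}\right)^2 + \left(\frac{x_3+x_4}{2}\right)^2 + \left(\frac{x_3-x_4}{2}\right)^2
\]
contradicts minimality. If $m_0$ is odd, choose representatives $y_i \equiv x_i \pmod{m_0}$ with $|y_i| < m_0/2$; then $\sum y_i^2 \equiv 0 \pmod{m_0}$ and $0 \le \sum y_i^2 < m_0^2$, so $\sum y_i^2 = m_0 m_1$ for some $0 \le m_1 < m_0$. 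One then checks that $m_1 \ne 0$ (else each $x_i \equiv 0 \pmod{m_0}$, forcing $m_0 \mid p$ and hence $m_0 = p$, contradicting $m_0 < p$), and applies Euler's identity to $(m_0 p)(m_0 m_1) = m_0^2 m_1 p$ to realize $m_1 p$ as a sum of four squares, again contradicting minimality.

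The main obstacle is precisely the descent: verifying that the four expressions produced from Euler's identity are all divisible by $m_0$ and that $m_1 \ne 0$ requires the careful modular bookkeeping sketched above. Once the descent succeeds we conclude $m_0 = 1$, i.e.\ $p$ itself is a sum of four squares, and the multiplicative identity then delivers the result for arbitrary positive integers.
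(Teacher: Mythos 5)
Your proof is correct and complete: Euler's four-square identity reduces the problem to odd primes, the pigeonhole argument on the sets $A$ and $B$ produces a multiple $mp$ with $1 \le m < p$ that is a sum of four squares, and the descent (splitting into the even and odd cases for $m_0$, with the check that $m_1 \neq 0$ and that the four outputs of Euler's identity are divisible by $m_0$) brings $m_0$ down to $1$. This is the classical elementary argument of Lagrange. The paper, by contrast, does not prove this theorem at all; it states it with a citation to Lagrange's original memoir and treats it as historical motivation. The paper's own machinery would instead yield the result analytically: Jacobi's formula $r_Q(m) = 8\sum_{0<d\mid m,\,4\nmid d} d$, or more generally the Siegel local-density decomposition $r_Q(m) = a_E(m) + a_S(m)$ developed in the body of the paper, shows $r_Q(m) > 0$ for every positive integer $m$ and hence universality. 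The trade-off is the usual one: your descent argument is elementary and self-contained but gives only existence of a representation, while the modular-forms approach requires substantially more theory but produces the exact representation numbers, which is what the paper needs for its quadratic-field generalizations.
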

This example was crucial in that the form represented all positive integers. That is, it was \textbf{universal}. Classifying the universal positive definite quadratic forms over $\mathbb Z$ was a long standing problem. An enumeration of results towards the final answer:
\begin{theorem}
\begin{enumerate}
\item There is no universal positive definite quadratic form in three variables over $\mathbb Z$.
\item (Ramanujan-Dickson) There are precisely $54$ diagonal universal quaternary forms.
\item (Halmos) A diagonal quaternary form is universal iff it represents $1$ through $15$.
\item The 15 Theorem: (Conway-Schneeberger, Bhargava) A classical form is universal iff it represents $1$ through $15$. Moreover, up to equivalence there are $204$ such quaternary forms.
\item The 290 Theorem: (Bhargava Hanke) A form is universal iff it represents $1$ through $290$. Moreover, up to equivalence, there are $6436$ such quaternary forms.
\end{enumerate}
\end{theorem}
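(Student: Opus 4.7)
The theorem bundles five landmark results, so the plan is to outline a distinct strategy for each.

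For (1), I would exploit local obstructions. Diagonalizing a positive definite ternary $Q$ over each $\mathbb{Z}_p$ and consulting the classification of ternary $p$-adic forms shows that some square class is missed at a specific prime; the plan is then to lift this to a global unrepresented integer. The subtlety is that a ternary's genus may represent every integer even when an individual class does not, so I would refine the local argument at $2$ (or at an anisotropic prime) to give a congruence condition modulo a fixed power that every form in the genus fails to hit, producing a globally unrepresented $m$.

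For (2) and (3), the approach is direct enumeration. I would parameterize a candidate $ax^2+by^2+cz^2+dw^2$ with $a\le b\le c\le d$, bound each coefficient successively by the requirement to represent the smallest currently unrepresented integer, and verify universality of each surviving candidate (for example by decomposing its theta series, or by explicit representation formulas coming from quaternion algebras). Halmos's refinement (3) is then the observation that once integers through $15$ are represented, the enumeration is forced to terminate at a form that is actually universal.

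For (4) and (5) the unifying technique is the method of \emph{escalator lattices}, due to Conway--Schneeberger and refined by Bhargava and Bhargava--Hanke. I would build a rooted tree starting from the zero lattice by iteratively adjoining a new basis vector whose norm equals the smallest positive integer not yet represented (the ``truant''); this produces finitely many children at each node. A universal form must contain some leaf of this tree as a sublattice, so universality of all forms reduces to universality of each leaf. Checking that the tree is finite requires showing that once the rank reaches four (five in the non-classical case), the possible truants are tightly constrained.

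The hard part, where essentially all the analytic work lives, is certifying universality at each leaf. For a leaf form $Q$ the plan is to decompose the theta series $\theta_Q$ into Eisenstein and cuspidal parts, combine an effective lower bound on the Eisenstein coefficients (from local densities and the Siegel mass formula) with an effective upper bound on the cuspidal coefficients (via Deligne-type estimates on weight-two Hecke eigenforms and a careful treatment of anisotropic places), and extract a constant $N_0$ beyond which every locally represented integer is actually globally represented. The argument is completed by a finite check of representations up to $N_0$, and the critical-set bounds $15$ (classical) and $290$ (non-classical) emerge as the worst case of this analysis across the entire escalator tree.
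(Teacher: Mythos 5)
The paper offers no proof of this theorem at all: it is stated as historical background, with the five results attributed to their authors and left to the cited literature (Ramanujan, Dickson, Halmos, Conway--Schneeberger, Bhargava, Bhargava--Hanke). So there is no argument in the paper to compare yours against; the only question is whether your outline faithfully reflects how these results are actually proved, and on the whole it does. The escalator-lattice description for (4) and (5) is the right mechanism, the enumeration strategy for (2) and (3) is correct, and the division of labor in (5) between effective Eisenstein lower bounds, cuspidal upper bounds, and a finite computation is an accurate picture of Bhargava--Hanke.

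Two points where the sketch could be tightened. For (1), the worry about the genus representing an integer that an individual class misses is a red herring: the obstruction is purely local. A positive definite ternary form is anisotropic at the real place, and the product formula for Hasse invariants forces it to be anisotropic at at least one finite prime $p$; an anisotropic ternary form over $\mathbb Q_p$ misses an entire square class of $\mathbb Q_p^{\times}$, and infinitely many positive integers lie in that class, so they are not represented even over $\mathbb Z_p$. No refinement at $2$ or genus-versus-class analysis is needed. Second, your parenthetical ``rank four (five in the non-classical case)'' is imprecise: both the 15 and the 290 theorem proofs involve auxiliary escalations to dimension five for a handful of non-universal four-dimensional escalators. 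Also be aware that Bhargava's verification of the $204$ leaves in (4) is largely arithmetic (reduction to known universal or almost-universal ternary sections) rather than the analytic Eisenstein/cusp machinery, which is genuinely needed only for (5). Finally, keep in mind that what you have written is a program, not a proof --- each of the five items is the content of a substantial paper --- but as a survey of the methods it is sound.
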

Proving that numbers are represented by a fixed quadratic form, however, is different than providing the exact representation numbers. Results here include:
\begin{theorem}[Jacobi]
(1834) Let $Q$ be the sum of four squares over $\mathbb Z$ and let $m \in \mathbb N$. Then $$r_Q(m) = 8 \cdot \displaystyle\sum_{0<d \vert m, 4 \nmid d}d.$$
\end{theorem}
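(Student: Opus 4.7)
The plan is to prove Jacobi's identity via the theta-series / modular forms machinery, which is the approach most in keeping with the spirit of the rest of the paper. Setting $q = e^{2\pi i z}$ for $z$ in the upper half-plane and defining $\theta(z) = \sum_{n \in \mathbb{Z}} q^{n^2}$, one has
\[
\theta(z)^4 = \sum_{m \geq 0} r_Q(m)\, q^m,
\]
so the goal reduces to identifying $\theta^4$ as an explicit modular form and reading off its Fourier expansion.

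First I would establish that $\theta$ is a weight-$1/2$ modular form on $\Gamma_0(4)$ with the standard quadratic character, which implies $\theta^4 \in M_2(\Gamma_0(4))$. A standard dimension count then gives $\dim M_2(\Gamma_0(4)) = 2$ and $\dim S_2(\Gamma_0(4)) = 0$, so the entire space is Eisenstein and $\theta^4$ is uniquely determined by any two of its Fourier coefficients.

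Next I would exhibit a basis of $M_2(\Gamma_0(4))$ built from the quasi-modular Eisenstein series $E_2(z) = 1 - 24 \sum_{n \geq 1} \sigma_1(n)\, q^n$: although $E_2$ itself is not modular, the weighted differences $E_2(z) - 2 E_2(2z)$ and $E_2(z) - 4 E_2(4z)$ do lie in $M_2(\Gamma_0(4))$ and are linearly independent. Using the elementary identity $\sum_{d \mid n,\, 4 \nmid d} d = \sigma_1(n) - 4\sigma_1(n/4)$ (with the convention $\sigma_1(n/4) = 0$ when $4 \nmid n$), the candidate generating function
\[
1 + 8 \sum_{n \geq 1} \Bigl(\, \sum_{d \mid n,\, 4 \nmid d} d \Bigr) q^n
\]
is seen to coincide with $-\tfrac{1}{3}\bigl(E_2(z) - 4 E_2(4z)\bigr)$ and is therefore an element of $M_2(\Gamma_0(4))$. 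Equality with $\theta^4$ then follows from matching the constant term and the coefficient of $q$, since a two-dimensional space of modular forms is rigid with respect to any two Fourier coefficients.

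The main obstacle is the weight-$2$ subtlety: since $E_2$ is only quasi-modular, one must take precisely the right weighted difference to land in $M_2(\Gamma_0(4))$, and one must separately verify the half-integral transformation law for $\theta$ (the classical Jacobi identity under $z \mapsto -1/(4z)$) in order to know that $\theta^4$ is modular in the first place. Once these two ingredients are in hand, the proof reduces to a short Fourier-coefficient comparison, in line with the general Eisenstein-versus-cusp decomposition strategy developed in the rest of the paper.
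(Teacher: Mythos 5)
Your proof is correct, but it follows a genuinely different route from the one the paper takes (the paper states Jacobi's theorem without proof, and its working method for all such formulas is Siegel's local-density machinery). In the paper's framework one would write $r_Q(m)=a_E(m)+a_C(m)$, compute $a_E(m)=\prod_{\mathfrak p}\beta_{\mathfrak p}(m)$ by evaluating the archimedean factor, the density at $2$, and the densities at primes dividing $m$, and then invoke $\dim S_2(\Gamma_0(4))=0$ to conclude $r_Q=a_E$. You instead use the vanishing of the cusp space together with an explicit Eisenstein basis built from the quasi-modular $E_2$, namely $E_2(z)-2E_2(2z)$ and $E_2(z)-4E_2(4z)$, and identify $\theta^4$ with $-\tfrac{1}{3}\bigl(E_2(z)-4E_2(4z)\bigr)$ by matching two coefficients. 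Your identity $\sum_{d\mid n,\,4\nmid d}d=\sigma_1(n)-4\sigma_1(n/4)$ and the claim that this linear combination has $q$-expansion $1+8\sum_{n\ge 1}\bigl(\sigma_1(n)-4\sigma_1(n/4)\bigr)q^n$ both check out, as does the dimension count. One small point to make explicit: a two-dimensional space is not rigid with respect to an \emph{arbitrary} pair of Fourier coefficients; you need the evaluation map $f\mapsto(a_0(f),a_1(f))$ to be injective on $M_2(\Gamma_0(4))$, which follows from your basis since the vectors $(-1,-24)$ and $(-3,-24)$ are independent (equivalently, the Sturm bound here is $1$). The trade-off between the two approaches is worth noting: your argument is shorter and entirely elementary once the transformation law of $\theta$ is known, but it leans on having an explicit, easily manipulated Eisenstein series on $\Gamma_0(4)$ over $\mathbb Q$; the local-density computation is heavier but is the method that survives the passage to real quadratic fields, which is why the paper adopts it for $\mathbb Q(\sqrt 5)$, $\mathbb Q(\sqrt 2)$, and the other cases.
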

\begin{theorem}[G\"otzsky, Thompson]
The sum of four squares over $\mathbb Q(\sqrt{5})$ is universal. Moreover, for $m \in \mathcal O_{\mathbb Q (\sqrt{5})}^+$ $$r_Q(m) =8 \displaystyle\sum_{0 \neq (d) \vert m}N_{K/ \mathbb Q}(d) - 4 \displaystyle\sum_{2 \vert (d) \vert m}N_{K/ \mathbb Q}(d) + 8 \displaystyle\sum_{4 \vert (d) \vert m}N_{K/ \mathbb Q}(d). $$
\end{theorem}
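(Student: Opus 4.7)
The plan is to identify $\theta_Q$ with the Eisenstein part of an appropriate space of Hilbert modular forms and then extract the Fourier coefficients via a local density computation. Let $K = \mathbb{Q}(\sqrt{5})$ and consider
$$\theta_Q(\tau_1,\tau_2) \;:=\; \sum_{\vec x \in \mathcal O_K^{\,4}} e^{2\pi i \left(\sigma_1(Q(\vec x))\tau_1 + \sigma_2(Q(\vec x))\tau_2\right)}.$$
A standard Poisson summation argument shows that $\theta_Q$ is a Hilbert modular form of parallel weight $2$ on $\Gamma_0(\mathfrak n)$ for some level $\mathfrak n \mid (4)$. By the general decomposition for such forms we may write $\theta_Q = E_Q + C_Q$, where $E_Q$ is the genus-theta Eisenstein series and $C_Q$ is a parallel weight $2$ cusp form.

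The first key step is to show $C_Q \equiv 0$. Since $2$ is inert in $K$ (as $5 \equiv 5 \pmod 8$), the level $\mathfrak n$ is supported only at the prime $(2)$, of norm $4$, and possibly at the ramified prime $(\sqrt 5)$. A dimension count for the space of parallel weight $2$ Hilbert cusp forms at this level on $\mathbb Q(\sqrt 5)$ --- via the Shimizu trace formula or the Hirzebruch--Zagier description of the associated Hilbert modular surface --- shows this space is trivial, so $r_Q(m) = a_{E_Q}(m)$ for every $m \in \mathcal O_K^+$.

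Next I would compute $a_{E_Q}(m)$ via the Siegel--Weil formula as a product of local densities $a_{E_Q}(m) = C_\infty \prod_{\mathfrak p} \beta_{\mathfrak p}(m,Q)$, with the archimedean factor producing the overall constant $8$. At every prime $\mathfrak p \nmid 2$ the form $\sum x_i^2$ is unimodular and Hensel lifting collapses the local density to the geometric divisor sum $\sum_{\mathfrak p^k \mid m} N(\mathfrak p)^k$; multiplying these factors over all $\mathfrak p \nmid 2$ produces the main term $8 \sum_{(d) \mid m} N_{K/\mathbb Q}(d)$. At the inert prime $(2)$, where $\mathcal O_K/(2) \cong \mathbb F_4$, the reduction $\sum x_i^2 \equiv (x_1 + x_2 + x_3 + x_4)^2 \pmod 2$ is totally isotropic and Hensel fails; a case analysis stratified by $v_{(2)}(m)$ then yields exactly the two correction sums $-4\sum_{(2)\mid (d) \mid m} N_{K/\mathbb Q}(d)$ and $+8\sum_{(4)\mid (d) \mid m} N_{K/\mathbb Q}(d)$. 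Universality is immediate, since $(1) \mid m$ already contributes $8$ to the main term and the local factor at $(2)$ is positive for every choice of $v_{(2)}(m)$.

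The principal obstacle is the explicit $2$-adic local density at the inert prime: the degeneracy of $\sum x_i^2$ modulo $(2)$, combined with the enlarged unit-square structure of $\mathbb Z_2[\sqrt 5]^\times$, forces a careful case split on the residue of $m$ modulo successive powers of $(2)$, and matching the resulting arithmetic against the combinatorial shape of the three divisor sums is where the real work lies. Once this is in hand, both the closed formula and G\"otzky's universality theorem fall out in a single stroke.
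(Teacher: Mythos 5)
Your proposal follows essentially the same route as the paper: decompose $\Theta_Q$ into Eisenstein and cuspidal parts, show that the space of parallel weight two Hilbert cusp forms of level $(4)$ over $\mathbb Q(\sqrt 5)$ is trivial (the paper cites Demb\'el\'e's explicit computations and the author's implementation of his algorithm rather than a trace-formula dimension count, but the content is the same), and evaluate the Eisenstein coefficients via Siegel's local densities, with the archimedean and generic primes giving the main divisor sum and the only real work occurring at the inert prime $(2)$. The plan is sound and matches the paper's proof in structure.
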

The proofs of these theorems are analytic, relying heavily on the theory of modular forms and in particular the theory of {local densities} developed by Siegel.  Throughout let $Q$ be a quaternary positive-definite integral quadratic form over a real quadratic number field $K$ (or $K= \mathbb Q$). [Note that all definitions can be altered so that $Q$ has three or more variables and is defined over any totally real number field. Note also that we refer to the symmetric matrix associated to $Q$ as $M_Q$.] We define the \textbf{level} ${\mathfrak N}_Q$ of $Q$ to be the largest $\mathcal O_K$ ideal such that for all primes $\mathfrak{p}$, ${\mathfrak N}_{\mathfrak{p}}(2M_{Q,{\mathfrak{p}}})^{-1}$ is a matrix of integral ideals with diagonal entries in $2 \mathcal O_K$. We also define the \textbf{determinant} ${\mathfrak D}_Q$ of $Q$ to be $\det(M_Q)$. Last, we set the following Hecke character defined for all $\mathfrak{p} \nmid 2{\mathfrak N}_Q$ by $$\chi_Q(\mathfrak{p}) = \left( \dfrac{\mathfrak D_Q}{\mathfrak{p}} \right).$$ We then define the \textbf{theta series} associated to $Q$ as $$\Theta_Q(z) = 1 + \displaystyle\sum_{m \in ({\mathfrak{d}^{-1}})^+} r_Q(m) e^{2 \pi i Tr(m\cdot z)}.$$

\begin{theorem}
$\Theta_Q(m)$ is a Hilbert modular form of (parallel) weight $k=2$ over $\Gamma_0(\mathfrak N_Q)$ with associated character $\chi_Q$.
\end{theorem}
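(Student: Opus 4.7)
The plan is to derive the modular transformation of $\Theta_Q$ by combining translation invariance with an application of Poisson summation at each archimedean place, and then to identify the level and character through local analysis at the finite primes.

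First I would rewrite the theta series in its natural form
\[
\Theta_Q(z) = \sum_{\vec{x} \in \mathcal{O}_K^n} e^{2\pi i \, Tr(Q(\vec{x})\, z)},
\]
and observe translation invariance: for $\lambda \in \mathfrak{d}^{-1}$, $Tr(Q(\vec{x})\lambda) \in \mathbb{Z}$, so $\Theta_Q(z+\lambda) = \Theta_Q(z)$, which is exactly the weight-$2$ parallel invariance under the unipotent part of $\Gamma_0(\mathfrak{N}_Q)$. Because $Q$ is totally positive definite, the series converges absolutely on $\mathcal{H}^2$ and is holomorphic there.

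Next I would apply Hecke's generalization of Poisson summation to the lattice $\mathcal{O}_K^n$, whose dual (with respect to $Tr(Q(\vec{x},\vec{y})\cdot)$) is controlled by $(2M_Q)^{-1}\mathfrak{d}^{-1}$. Place by place, Gaussian integration produces the factor $(z_1 z_2)^{n/2}$ up to a constant involving $\mathfrak{D}_Q$, yielding an identity relating $\Theta_Q(-1/z)$ to the theta series of the dual form. Combined with translation invariance, this gives the transformation law under a set of generators of $\Gamma_0(\mathfrak{N}_Q)$.

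The main obstacle, and the step I would dwell on, is isolating the precise level $\mathfrak{N}_Q$ and verifying that the automorphy factor is exactly $\chi_Q\bigl(\begin{smallmatrix}a&b\\c&d\end{smallmatrix}\bigr)(cz+d)^2$ rather than merely a quasi-character. This is a local problem at each finite $\mathfrak{p}$: using the definition of $\mathfrak{N}_Q$ as the largest ideal for which $\mathfrak{N}_{\mathfrak{p}}(2M_{Q,\mathfrak{p}})^{-1}$ is integral with diagonal in $2\mathcal{O}_K$, I would compare the $\mathfrak{p}$-adic lattices $\mathcal{O}_{K,\mathfrak{p}}^n$ and their $Q$-duals, and compute the resulting Gauss sum. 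For $\mathfrak{p} \nmid 2\mathfrak{N}_Q$ this Gauss sum is the quadratic symbol $(\mathfrak{D}_Q/\mathfrak{p})$ (giving $\chi_Q$); for $\mathfrak{p}\mid 2\mathfrak{N}_Q$ the integrality conditions in the definition of $\mathfrak{N}_Q$ are precisely what make the cocycle on $\Gamma_0(\mathfrak{N}_\mathfrak{p})$ trivial.

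Finally I would verify holomorphicity at every cusp: because $Q$ is totally positive definite, the Fourier expansion at each cusp is supported on totally positive elements of a fractional ideal, so there are no negative-frequency terms, and the $(n/2)$-th power of the imaginary part that appears from Poisson summation matches parallel weight $k = n/2 = 2$ for $n=4$. This yields the full statement.
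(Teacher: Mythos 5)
The paper does not actually prove this statement: its ``proof'' is a citation to Andrianov--Zhuralev \cite{Andrianov}, Theorem 2.2. Your outline is essentially the classical argument that underlies that reference -- translation invariance plus Poisson summation over the lattice $\mathcal O_K^4$, Gauss sums at the finite places to pin down the level and nebentypus, and positivity of $Q$ to control the cusps -- so in spirit you are reconstructing the content of the cited result rather than taking a different route. A few of your individual observations are sound and even slightly stronger than needed: since $Q(\vec x)\in\mathcal O_K$ and $\operatorname{Tr}(\mathcal O_K\,\mathfrak d^{-1})\subseteq\mathbb Z$, you do get invariance under translation by all of $\mathfrak d^{-1}$, and for $[K:\mathbb Q]=2$ the Koecher principle would let you dispense with the cusp verification entirely.

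The genuine gap is the sentence ``Combined with translation invariance, this gives the transformation law under a set of generators of $\Gamma_0(\mathfrak N_Q)$.'' In the elliptic modular case one leans on the explicit generation of $\mathrm{SL}_2(\mathbb Z)$ by $S$ and $T$ and on Fricke-involution tricks for $\Gamma_0(N)$; for a Hilbert modular group over a real quadratic field there is no comparably simple generating set for $\Gamma_0(\mathfrak N_Q)$, and even granting generation by elementary matrices one must verify that the automorphy cocycle $\chi_Q\bigl(\begin{smallmatrix}a&b\\c&d\end{smallmatrix}\bigr)N(cz+d)^{2}$ is consistent on \emph{products} of the matrices you control, not just on each generator. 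This consistency check (equivalently, showing the multiplier system is a genuine character of $\Gamma_0(\mathfrak N_Q)$ rather than a projective one) is where the bulk of the work lies in Andrianov's treatment, and it is precisely the step your plan waves at. A complete write-up would either carry out that induction on elementary matrices carefully or route the whole argument through the Weil representation, where the cocycle identities are built in.
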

\begin{proof}
See \cite[Theorem 2.2, pg. 61]{Andrianov}.
\end{proof}
 As each space of modular forms of fixed weight, level and character decomposes into a direct sum of Eisenstein series and cusp forms, we write $\Theta_Q(z) = E_Q(z) \oplus S_Q(z)$, and $$r_Q(m) = a_E(m) + a_S(m)$$ for each coefficient in the Fourier expansion of $\Theta_Q$.

For $Q$ the sum of four squares the character $\mathfrak D_Q$ will always be trivial. Moreover, the level ideal $\mathfrak N_Q$ is $(4)$.
\subsection{Computing Local Densities}
\begin{theorem}[Siegel] Let $Q$ be a positive definite quadratic form over $\mathcal O_K$. Let $r_Q(m)=a_E(m)+a_C(m)$ be the decomposition of the Fourier coefficient of the $\Theta$ series into Eisenstein and cusp components. The $m^{th}$ Eisenstein coefficient of the theta series $\Theta_Q$ is given by
\begin{eqnarray*}
a_E(m) & = & \displaystyle\prod_{\mathfrak p} \beta_{\mathfrak p}(m) 
\end{eqnarray*}
where $\mathfrak p$ runs over all places of $K$ and where $\beta_{\mathfrak p}(m)$ is the $\mathfrak p$-adic local density, defined to be $$\beta_{\mathfrak p}(m) := \displaystyle\lim_{U \to \{m\}} \dfrac{\operatorname{Vol}(Q^{-1}(U))}{\operatorname{Vol}(U)}$$ where $U$ is an open neighborhood of $m$ in $K_{\mathfrak p}$ and where the volume is determined by a fixed Haar measure.
\end{theorem}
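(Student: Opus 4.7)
The plan is to realize the stated identity as an instance of Siegel's mass formula (the Siegel--Weil formula in this quaternary setting), adapted to the Hilbert modular context. First, I would introduce the genus of $Q$, namely the finite set of $\mathcal O_K$-equivalence classes $\{Q_1,\dots,Q_h\}$ of forms that are $\mathcal O_{K,\mathfrak p}$-equivalent to $Q$ at every completion. To each representative I attach the weight $1/|O(Q_i)|$, where $O(Q_i)$ is the (finite) orthogonal group, and I form the genus theta series
\[
\Theta_{\mathrm{gen}(Q)}(z)\;=\;\frac{1}{\mathrm{mass}(Q)}\sum_{i=1}^{h}\frac{\Theta_{Q_i}(z)}{|O(Q_i)|},\qquad \mathrm{mass}(Q)=\sum_{i=1}^{h}\frac{1}{|O(Q_i)|}.
\]
The first key input is Siegel's theorem: $\Theta_{\mathrm{gen}(Q)}(z)$ coincides with the Eisenstein component $E_Q(z)$ of $\Theta_Q(z)$ inside the space of Hilbert modular forms of weight $2$, level $\mathfrak N_Q$, and character $\chi_Q$ guaranteed by the theorem cited from Andrianov. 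This immediately identifies $a_E(m)$ with the genus-averaged representation number of $m$.

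Second, I would factor this genus-averaged representation number into local contributions. The cleanest route is adelic: embed the classical counting problem in $O(Q)(\mathbb A_K)$, use strong approximation off an archimedean place together with the Hasse principle for representations within a genus, and unfold to obtain
\[
\frac{1}{\mathrm{mass}(Q)}\sum_{i=1}^{h}\frac{r_{Q_i}(m)}{|O(Q_i)|}\;=\;\prod_{\mathfrak p}\beta_{\mathfrak p}(m),
\]
the product running over all places of $K$. At a finite place $\mathfrak p$ the local factor $\beta_{\mathfrak p}(m)$ is interpreted as the limit of $|\mathcal O_{K,\mathfrak p}/\mathfrak p^N|^{1-n}$ times the number of solutions of $Q(\vec x)\equiv m\pmod{\mathfrak p^N}$ for large $N$, while at an archimedean place the limit definition given in the statement records the Jacobian volume coming from positive-definiteness. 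Combining these two steps yields the claimed formula for $a_E(m)$.

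The main obstacle is the second step: verifying the Hasse principle/adelic factorization in the Hilbert modular setting, including convergence of the (conditionally convergent) Euler product over $\mathfrak p$ and the correct bookkeeping at the archimedean places and at primes dividing $2\mathfrak N_Q$, where the naive formulas for $\beta_{\mathfrak p}(m)$ must be handled by the limiting definition rather than the closed-form count. These are precisely the technical points developed by Siegel and subsequently recast adelically by Weil and Kneser; I would therefore appeal to those references (as is done in the paper) rather than reproduce the argument, noting that the theta-integral identification of $\Theta_{\mathrm{gen}(Q)}$ with an Eisenstein series justifies the identification $E_Q = \Theta_{\mathrm{gen}(Q)}$ used in the first step.
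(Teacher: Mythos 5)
The paper does not actually prove this theorem: it is quoted as Siegel's classical result (the attribution ``[Siegel]'' in the theorem header is the entire justification offered), so there is no in-paper argument to compare yours against. Your sketch is the standard route to that result --- form the genus, take the mass-weighted average of theta series, invoke Siegel--Weil to place $\Theta_{\mathrm{gen}(Q)}$ in the Eisenstein subspace, and factor the genus-averaged representation number adelically into local densities --- and it is correct in outline, with the same ultimate reliance on Siegel/Weil/Kneser for the substance that the paper makes implicitly. Two points deserve explicit mention if you were to flesh this out. First, knowing that $\Theta_{\mathrm{gen}(Q)}$ lies in the Eisenstein subspace is not by itself enough to conclude $E_Q=\Theta_{\mathrm{gen}(Q)}$; you also need that $\Theta_Q-\Theta_{\mathrm{gen}(Q)}$ is a cusp form, i.e.\ the classical fact that theta series of forms in the same genus agree at every cusp. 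Your sketch asserts the identification but only supplies the first half of its justification. Second, in the setting actually used here the parallel weight is $2$, where the relevant Hilbert--Eisenstein series converges only conditionally and the Siegel--Weil identity requires Hecke's trick (or the extended Siegel--Weil formula); you flag the convergence issue, which is the right instinct, but it is the one place where ``quaternary'' is genuinely harder than five or more variables. Neither point is a wrong turn --- both are handled in the references you name --- so your proposal is an acceptable, if deliberately high-level, stand-in for the proof the paper omits.
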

\noindent In practice, we consider
\begin{eqnarray*}
a_E(m) & = & \displaystyle\prod_{\mathfrak p} \beta_{\mathfrak p}(m) \\
& = & \left({ \displaystyle\prod_{\mathfrak p \vert \infty} \beta_{\mathfrak p}(m)}\right) {\left( \displaystyle\prod_{\mathfrak p \vert N_Q}\beta_{\mathfrak p}(m)\right) } {\left( \displaystyle\prod_{\mathfrak p \vert m, \mathfrak p \nmid N_Q} \beta_{\mathfrak p} (m) \right)} {\left( \displaystyle\prod_{\mathfrak p \nmid N_Qm} \beta_{\mathfrak p}(m)\right) }.
\end{eqnarray*}

\noindent For the product over the infinite places, a second result of Siegel applies (note that the reference provided states the most general case while what is stated is not):
\begin{theorem}
\cite[Hilfssatz 72]{Siegel} Let $K$ be a totally real number field with discriminant $D$ and with $h = [K: \mathbb Q]$. Let $m \in \mathcal O_K^+$ and let $\sigma$ be the discriminant of a four-variable positive definite quadratic form $Q$ defined over $\mathcal O_K$. Then
\begin{eqnarray*}
\displaystyle\prod_{\mathfrak{p} \vert \infty} \beta_{\mathfrak p}(m) & = & \pi^{2h} D^{-3/2} \left(N_{K/ \mathbb Q}( \sigma) \right) ^{-1/2} N_{K/ \mathbb Q} (m). 
\end{eqnarray*}
\end{theorem}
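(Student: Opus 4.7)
The plan is to reduce the infinite-place mass calculation to a direct Euclidean volume computation at each of the $h$ real embeddings of $K$, then track the normalization of Haar measure on $K_\mathbb{R}$ to extract the discriminant factor $D^{-3/2}$.

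First I would identify $K_\mathbb{R} := K \otimes_\mathbb{Q} \mathbb{R} \cong \mathbb{R}^h$ via the real embeddings $\sigma_1, \ldots, \sigma_h$. Under this identification the form $Q$ extends from $\mathcal{O}_K^4$ to a block-diagonal positive definite form on $K_\mathbb{R}^4 \cong \mathbb{R}^{4h}$, with blocks $\sigma_i(M_Q)$, and the product of archimedean densities becomes the single limit
$$\beta_\infty(m) \;=\; \lim_{U \to \{m\}} \frac{\mathrm{Vol}(Q^{-1}(U))}{\mathrm{Vol}(U)},$$
with $U$ a shrinking neighborhood of $m$ in $K_\mathbb{R}$.

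Second I would compute this ratio using Lebesgue measure on $\mathbb{R}^{h}$ and $\mathbb{R}^{4h}$. At each $\sigma_i$, make the linear change of variables $y = \sigma_i(M_Q)^{1/2} x$ to turn the form into the standard sum of squares, picking up a Jacobian $\det(\sigma_i(M_Q))^{-1/2}$; then in spherical coordinates on $\mathbb{R}^4$ with $r^2 = |y|^2$, the four-dimensional volume element becomes $\tfrac{1}{2} r^2 \, d(r^2)\, d\omega$. Applying this to a small box $U = \prod_i [\sigma_i(m), \sigma_i(m)+\epsilon_i]$ and using $\mathrm{Vol}(S^3) = 2\pi^2$ yields
$$\beta_\infty(m)_{\mathrm{Leb}} \;=\; \prod_{i=1}^h \pi^2 \det(\sigma_i(M_Q))^{-1/2} \sigma_i(m) \;=\; \pi^{2h}\, N_{K/\mathbb{Q}}(\sigma)^{-1/2}\, N_{K/\mathbb{Q}}(m).$$

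Third I would correct for the Haar normalization. Siegel's product formula uses the Haar measure on $K_\mathbb{R}$ for which $\mathcal{O}_K$ has covolume $1$, i.e.\ Lebesgue scaled by $D^{-1/2}$. That rescaling enters once in the denominator (target in $K_\mathbb{R}$) and four times in the numerator (domain in $K_\mathbb{R}^4$), producing the net factor $D^{-4/2}/D^{-1/2} = D^{-3/2}$. Inserting it gives exactly the claimed formula.

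The hard part is purely bookkeeping: making precise which Haar measures are being used at the finite and infinite places, so that $a_E(m) = \prod_\mathfrak{p} \beta_\mathfrak{p}(m)$ holds with the stated constants rather than some power of $2$ or of $D$ off the mark. Once the normalization is pinned down, the archimedean computation is the elementary observation that the surface area of a Euclidean ellipsoid $\{Q(x)=m\}$ in $\mathbb{R}^4$ has the shape $C \cdot \det(M_Q)^{-1/2}\, m$, and the remaining work is purely multiplicative.
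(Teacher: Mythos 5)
The paper does not prove this statement at all: it is quoted directly from Siegel's Hilfssatz 72, and the only ``proof'' on offer is that citation. Your sketch is therefore a genuinely different route --- a self-contained derivation --- and it is correct in outline. The archimedean density at each real place is indeed the elementary computation you describe: after $y=\sigma_i(M_Q)^{1/2}x$ one has $dy = r^3\,dr\,d\omega = \tfrac12 r^2\,d(r^2)\,d\omega$, and with $\operatorname{Vol}(S^3)=2\pi^2$ the density of the pushforward of Lebesgue measure under $Q$ at $t=\sigma_i(m)$ is $\pi^2 t\,\det(\sigma_i(M_Q))^{-1/2}$, giving $\pi^{2h}N_{K/\mathbb Q}(\sigma)^{-1/2}N_{K/\mathbb Q}(m)$ over all $h$ places; and your accounting for the covolume-one normalization of $\mathcal O_K$ in $K_\mathbb R$ (one factor of $D^{-1/2}$ in the target, four in the source, net $D^{-(4-1)/2}$) is exactly where Siegel's $D^{-3/2}$ comes from, and it correctly generalizes to $D^{-(n-1)/2}$ for $n$-ary forms. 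What your approach buys is transparency: the reader sees that the archimedean factor is nothing but the surface measure of an ellipsoid, and that the discriminant power is a measure-normalization artifact rather than arithmetic content. What it costs is that the normalization step --- which you candidly flag as the hard part --- is only asserted, not pinned down; to make it rigorous you would have to fix the Haar measures at \emph{all} places simultaneously and verify that Siegel's product formula $a_E(m)=\prod_{\mathfrak p}\beta_{\mathfrak p}(m)$ holds with those choices, which is precisely the global content of Siegel's theorem that the citation is carrying. One minor caution: ``discriminant'' of a quadratic form is convention-dependent (e.g.\ $\det M_Q$ versus $\det 2M_Q$), and a mismatch would introduce a power of $2$; this is invisible in the paper's application since $M_Q=I$ for the sum of four squares, but you should state which convention $\sigma$ uses before claiming the exact constant.
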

When $Q$ is the sum of four squares over a real quadratic number field, this formula yields
\begin{eqnarray*}
\displaystyle\prod_{\mathfrak{p} \vert \infty} \beta_{\mathfrak p}(m) & = & \pi^4 D^{-3/2}N_{K/ \mathbb Q}(m).
\end{eqnarray*}

\noindent Using the methods of Hanke \cite{Hanke}, which in turn uses work of Kitaoka \cite{Kitaoka} the infinite product can be simplified:
\begin{theorem}
Let $Q$ be a quaternary positive definite integral quadratic form (with level ideal $N_Q$ and with character $\chi_Q$) defined over the totally real number field $K$. Let $m \in \mathcal O_K^+$. Then
\begin{eqnarray*}
\displaystyle\prod_{\mathfrak p \nmid N_Q m}\beta_{\mathfrak p}(m) & = & L_{K}(2, \chi_Q)^{-1} \left( \displaystyle\prod_{\mathfrak p \vert N_Qm} \dfrac{(N_{K/ \mathbb Q}(\mathfrak p))^2}{(N_{K/ \mathbb Q}(\mathfrak p))^2 - \chi_Q(\mathfrak p)} \right).
\end{eqnarray*}
\end{theorem}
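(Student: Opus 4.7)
The plan is to reduce the identity to an Euler-product manipulation once the local density at a good prime is in hand.

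First I would establish the closed-form evaluation of $\beta_{\mathfrak p}(m)$ at every prime $\mathfrak p \nmid \mathfrak N_Q m$. For such a prime, $Q$ is unimodular and nondegenerate over $\mathcal O_{K,\mathfrak p}$ and $m$ is a unit at $\mathfrak p$, so Kitaoka's local machinery (as repackaged by Hanke) diagonalizes $Q$ over $\mathcal O_{K,\mathfrak p}$ and reduces $\beta_{\mathfrak p}(m)$ to a point count of the smooth affine quadric $Q=m$ over the residue field $\mathcal O_K/\mathfrak p$. The usual count, together with the definition $\chi_Q(\mathfrak p) = (\mathfrak D_Q / \mathfrak p)$, gives
$$
\beta_{\mathfrak p}(m) \;=\; 1 - \chi_Q(\mathfrak p)\, N_{K/\mathbb Q}(\mathfrak p)^{-2}.
$$

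Second, I would extend $\chi_Q$ to the bad primes by the convention $\chi_Q(\mathfrak p) = 0$ for $\mathfrak p \mid \mathfrak N_Q$, so that the Hecke $L$-series admits the Euler product
$$
L_K(s, \chi_Q) \;=\; \prod_{\mathfrak p} \bigl(1 - \chi_Q(\mathfrak p)\, N_{K/\mathbb Q}(\mathfrak p)^{-s}\bigr)^{-1}
$$
over \emph{all} prime ideals. Specialising to $s=2$ and partitioning the primes into those dividing $\mathfrak N_Q m$ and those not, I split
$$
L_K(2, \chi_Q)^{-1} \;=\; \prod_{\mathfrak p \nmid \mathfrak N_Q m} \bigl(1 - \chi_Q(\mathfrak p)\, N_{K/\mathbb Q}(\mathfrak p)^{-2}\bigr) \cdot \prod_{\mathfrak p \mid \mathfrak N_Q m} \bigl(1 - \chi_Q(\mathfrak p)\, N_{K/\mathbb Q}(\mathfrak p)^{-2}\bigr).
$$
Combining with Step 1, the left-hand factor equals $\prod_{\mathfrak p \nmid \mathfrak N_Q m}\beta_{\mathfrak p}(m)$; solving for it and clearing denominators in each bad-prime Euler factor by multiplying top and bottom by $N_{K/\mathbb Q}(\mathfrak p)^2$ produces exactly the stated right-hand side.

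The main obstacle is Step 1: extracting the clean identity $\beta_{\mathfrak p}(m) = 1 - \chi_Q(\mathfrak p)\,N_{K/\mathbb Q}(\mathfrak p)^{-2}$ is where all of the arithmetic content lives, since $\beta_{\mathfrak p}$ is defined as a limit of volume ratios and must be evaluated by an explicit diagonalisation and residue-field count. I would rely on the treatments in Hanke and Kitaoka, which the paper has already cited for precisely this computation; once that input is accepted, the passage to the Euler product and the resulting formula is routine algebra. A minor point worth flagging is that the hypothesis $\mathfrak p \nmid m$ is essential at this step — primes dividing $m$ but not $\mathfrak N_Q$ contribute a polynomial in $N_{K/\mathbb Q}(\mathfrak p)^{-1}$ depending on $v_{\mathfrak p}(m)$, which is precisely why the stated product on the left is taken over $\mathfrak p \nmid \mathfrak N_Q m$ rather than just $\mathfrak p \nmid \mathfrak N_Q$.
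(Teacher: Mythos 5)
Your proposal is correct, and it is essentially the argument the paper delegates to Hanke and Kitaoka (the paper offers no proof of its own beyond that citation): evaluate $\beta_{\mathfrak p}(m) = 1 - \chi_Q(\mathfrak p)N_{K/\mathbb Q}(\mathfrak p)^{-2}$ at each good prime via the smooth point count over the residue field, then recognize the product over good primes as $L_K(2,\chi_Q)^{-1}$ with the Euler factors at $\mathfrak p \mid \mathfrak N_Q m$ divided back out. Your closing remark correctly identifies why the exceptional set must be $\mathfrak p \mid \mathfrak N_Q m$ and not just $\mathfrak p \mid \mathfrak N_Q$, which is the only point where a careless reading could go wrong.
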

\noindent All together, this means that in practice:
\begin{eqnarray}
a_E(m) & = & \displaystyle\prod_{\mathfrak p} \beta_{\mathfrak p}(m) \notag \\
& = & \left({ \displaystyle\prod_{\mathfrak p \vert \infty} \beta_{\mathfrak p}(m)}\right) {\left( \displaystyle\prod_{\mathfrak p \vert N}\beta_{\mathfrak p}(m)\right) } {\left( \displaystyle\prod_{\mathfrak p \vert m, \mathfrak p \nmid N} \beta_{\mathfrak p} (m) \right)} {\left( \displaystyle\prod_{\mathfrak p \nmid Nm} \beta_{\mathfrak p}(m)\right) } \notag\\
& = & \dfrac{\pi^4 D^{-3/2}N_{K/ \mathbb Q}(m)}{L_K(2, \chi_Q)} \left( \displaystyle\prod_{\mathfrak p \vert N_Q m} \dfrac{\beta_{\mathfrak p}(m)(N_{K/ \mathbb Q}(\mathfrak p))^2}{(N_{K/ \mathbb Q}(\mathfrak p))^2 - \chi_Q(\mathfrak p)} \right).
\end{eqnarray}
To compute $\beta_{\mathfrak p}(m)$ for the finitely-many primes $\mathfrak p \vert N_Qm$, there are two distinct approaches. The first of these options is outlined in Hanke \cite{Hanke}; this approach uses Henselian lifting arguments to reduce the problem of finding $\beta_{\mathfrak p}(m)$ to computing $r_{{\mathfrak p}^k}(m)$ for an explicit power $k$. The advantage to this method is that it applies in any totally real number field. Consequently, it is the method we will use. The second technique was developed by Yang \cite{Yang}. This work relates local densities to specific integrals which are computed directly. There is one crucial limitation with respect to our purposes: it is only highlighted in the case of $\mathbb Z$-integral quadratic forms.\\
\\
As for our purposes we will need number field results, we recall some terminology from Hanke \cite{Hanke} relevant to later sections.\\
\begin{definition}
Let $R_{\mathfrak{p}^v}(m) := \{ \vec{x} \in (\mathcal O_K/ {\mathfrak{p}^v} \mathcal O_K)^4 : Q(\vec{x}) \equiv m \pmod{\mathfrak{p}^v} \}$. Note that by definition $\# R_{\mathfrak{p}^v}(m)= r_{\mathfrak{p}^v}(m)$. \\
\indent $\vec{x} \in R_{\mathfrak{p}^v}(m)$ is of \textbf{Zero type} if $\vec{x} \equiv \vec{0} \pmod{\mathfrak p}$ (in which case, we say $\vec{x} \in R_{\mathfrak{p}^v}^{\operatorname{Zero}}(m)$ with $\#R_{\mathfrak{p}^v}^{\operatorname{Zero}}(m) := r_{\mathfrak{p}^v}^{\operatorname{Zero}}(m) $), is of \textbf{Good type} if $\mathfrak{p}^{v_j}\vec{x_j} \not\equiv \vec{0} \pmod{\mathfrak p}$ for some $j$ (in which case, we say $\vec{x} \in R_{\mathfrak{p}^v}^{\operatorname{\operatorname{Good}}}(m)$ with $\#R_{\mathfrak{p}^v}^{\operatorname{Good}}(m) := r_{\mathfrak{p}^v}^{\operatorname{Good}}(m) $), and is of \textbf{Bad type} otherwise.
\end{definition}

 We note that as we are only considering the sum of four squares, we will only need to consider Zero and Good type solutions. As for the reduction maps:
\begin{theorem}
$$r_{\mathfrak{p}^{k+ \ell}}^{\operatorname{Good}}(m) = N(\mathfrak{p})^{3\ell}r_{\mathfrak{p}^k}^{\operatorname{Good}}(m)$$ for $k \geq 2 {\operatorname{ord}}_{\mathfrak{p}}(2)+1$.
\end{theorem}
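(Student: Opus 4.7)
The plan is to reduce by induction on $\ell$ to the base case $\ell = 1$ (the hypothesis $k \ge 2\operatorname{ord}_{\mathfrak p}(2)+1$ is preserved under $k \mapsto k+1$, and the Good type is preserved under reduction), so it suffices to show that the reduction map $\rho : R_{\mathfrak p^{k+1}}^{\operatorname{Good}}(m) \to R_{\mathfrak p^k}^{\operatorname{Good}}(m)$ has every fibre of size $N(\mathfrak p)^3$. Write $d = \operatorname{ord}_{\mathfrak p}(2)$, fix a uniformiser $\pi$, and parametrise the lifts of any $\vec x_0 \in R_{\mathfrak p^k}^{\operatorname{Good}}(m)$ as $\vec x_0 + \pi^k \vec y$ with $\vec y \in (\mathcal O_K/\mathfrak p)^4$. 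Since $2k \ge k+1$, the Taylor expansion gives
\begin{equation*}
Q(\vec x_0 + \pi^k \vec y) \;\equiv\; Q(\vec x_0) + \pi^k\,\nabla Q(\vec x_0) \cdot \vec y \pmod{\mathfrak p^{k+1}},
\end{equation*}
so setting $\pi^k T := Q(\vec x_0) - m$ reduces the lifting condition to the single linear congruence $T + \nabla Q(\vec x_0) \cdot \vec y \equiv 0 \pmod{\mathfrak p}$. When $\mathfrak p \nmid 2$ the argument is immediate: the Good hypothesis forces some entry of $\nabla Q(\vec x_0) = 2\vec x_0$ to be a unit, the congruence has rank one, and exactly $N(\mathfrak p)^3$ lifts $\vec y$ exist.

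The delicate case is $\mathfrak p \mid 2$ (so $d \ge 1$). Here every coordinate of $\nabla Q(\vec x_0)$ lies in $\pi^d \mathcal O_{K,\mathfrak p}$, the linear term vanishes modulo $\mathfrak p$, and the lifting condition collapses to the $\vec x_0$-only constraint $T \equiv 0 \pmod{\mathfrak p}$: when it holds, all $N(\mathfrak p)^4$ values of $\vec y$ succeed, otherwise none does. To recover the uniform factor on average I will equip $R_{\mathfrak p^k}^{\operatorname{Good}}(m)$ with the free translation action of $G := (\mathcal O_K/\mathfrak p^d)^4$ given by $\vec x_0 \mapsto \vec x_0 + \pi^{k-d}\vec w$. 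The hypothesis $k \ge 2d+1$ is used to guarantee both $\pi^{2(k-d)} Q(\vec w) \equiv 0 \pmod{\mathfrak p^{k+1}}$ and $\pi^{k-d}\vec w \in \pi\mathcal O_{K,\mathfrak p}^4$, so the action preserves Good type together with the congruence $Q(\vec x_0) \equiv m \pmod{\mathfrak p^k}$, and the Taylor expansion now reads
\begin{equation*}
Q(\vec x_0 + \pi^{k-d}\vec w) \;\equiv\; Q(\vec x_0) + \pi^k u\,(\vec x_0 \cdot \vec w) \pmod{\mathfrak p^{k+1}},
\end{equation*}
where $u \in \mathcal O_{K,\mathfrak p}^{\times}$ is the unit defined by $2 = \pi^d u$. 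Because $\vec x_0$ has a unit coordinate, the $Q$-shift depends only on $\vec w \bmod \mathfrak p$ and is a surjective $\mathcal O_K/\mathfrak p$-linear form on $(\mathcal O_K/\mathfrak p)^4$, so within every $G$-orbit the residue $Q(\vec x_0) \bmod \mathfrak p^{k+1}$ equidistributes over $m + \pi^k(\mathcal O_K/\mathfrak p)$ and exactly $1/N(\mathfrak p)$ of the orbit satisfies $T \equiv 0 \pmod{\mathfrak p}$. Summing over orbits and multiplying by the $N(\mathfrak p)^4$ lifts attached to each such $\vec x_0$ yields
\begin{equation*}
r_{\mathfrak p^{k+1}}^{\operatorname{Good}}(m) \;=\; \frac{1}{N(\mathfrak p)}\,r_{\mathfrak p^k}^{\operatorname{Good}}(m) \cdot N(\mathfrak p)^4 \;=\; N(\mathfrak p)^3\, r_{\mathfrak p^k}^{\operatorname{Good}}(m).
\end{equation*}

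The main obstacle is precisely this dyadic case: the standard Hensel lift fails because $\nabla Q$ is not a unit vector modulo $\mathfrak p$, and the hypothesis $k \ge 2\operatorname{ord}_{\mathfrak p}(2)+1$ is exactly what makes the translation action free, controls the quadratic remainder, and preserves $R_{\mathfrak p^k}^{\operatorname{Good}}(m)$, so that the equidistribution argument can supply the missing factor of $N(\mathfrak p)^3$.
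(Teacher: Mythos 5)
Your argument is correct, but it is worth noting that the paper does not actually prove this statement: it simply cites Hanke's Lemma 3.2, whose proof proceeds coordinate-by-coordinate (isolating the variable $x_j$ witnessing Good type and applying a one-variable square-root--lifting count mod $\mathfrak p^{k+1}$ versus mod $\mathfrak p^k$, which stabilizes precisely once $k \ge 2\operatorname{ord}_{\mathfrak p}(2)+1$). Your route is genuinely different and self-contained: you observe that for $\mathfrak p \mid 2$ the fibre of the reduction map is all-or-nothing of size $N(\mathfrak p)^4$, and you recover the uniform factor $N(\mathfrak p)^3$ on average by translating $R_{\mathfrak p^k}^{\operatorname{Good}}(m)$ by $\pi^{k-d}(\mathcal O_K/\mathfrak p^d)^4$ and using that $\vec w \mapsto \vec x_0\cdot\vec w$ is a surjective linear form mod $\mathfrak p$ (Good type supplying the unit coordinate) to equidistribute $Q(\vec x_0) \bmod \mathfrak p^{k+1}$ over $m+\pi^k(\mathcal O_K/\mathfrak p)$ within each orbit; the inequality $k \ge 2d+1$ enters exactly where you say, to kill the quadratic remainder $\pi^{2(k-d)}Q(\vec w)$ modulo $\mathfrak p^{k+1}$ (freeness of the action only needs $k\ge d$, a harmless overstatement). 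All the checks go through: the action preserves the congruence mod $\mathfrak p^k$ and the Good condition, and the fibre condition $T\equiv 0 \pmod{\mathfrak p}$ is well defined on residues mod $\mathfrak p^k$ precisely because the linear term dies when $\mathfrak p\mid 2$. The one caveat is scope: your proof uses $\nabla Q = 2\vec x_0$ and ``Good $\Rightarrow$ some coordinate is a unit,'' so it covers the sum of four squares (and more generally unimodular diagonal forms), which is all this paper needs, whereas Hanke's lemma is stated for general diagonal forms with the weighted Good-type condition $\mathfrak p^{v_j}x_j\not\equiv 0$; what your approach buys in exchange is a cleaner conceptual explanation of why the dyadic hypothesis $k\ge 2\operatorname{ord}_{\mathfrak p}(2)+1$ is the right threshold.
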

\begin{proof}
See \cite[Lemma 3.2]{Hanke}.
\end{proof}
\begin{theorem}
The map 
\begin{eqnarray*}
\pi_Z: R_{\mathfrak{p}^k}^{\operatorname{Zero}}(m) & \to & R_{\mathfrak{p}^{k-2}} \left( \dfrac{m}{\pi_{\mathfrak{p}}^2} \right)\\
\vec{x} & \mapsto & \pi_{\mathfrak{p}}^{-1} \vec{x} \pmod{\mathfrak{p}^{k-2}}
\end{eqnarray*}
is a surjective map with multiplicity $N(\mathfrak{p})^4$. 
\end{theorem}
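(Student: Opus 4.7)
The plan is to prove the three claims in order: well-definedness, surjectivity, and uniformity of fiber size. Throughout I would work with representatives in $\mathcal O_K^4$ and keep careful track of which modulus I am reducing by. The key algebraic fact, used repeatedly, is that because $Q$ is the sum of four squares,
\[
Q(\vec{u}+\vec{w}) = Q(\vec{u}) + 2 \vec{u}\cdot\vec{w} + Q(\vec{w}),
\]
so adding a vector of high $\mathfrak{p}$-adic valuation to $\vec{u}$ perturbs $Q(\vec{u})$ by a term of at least double that valuation (plus a cross term). Implicitly I assume $k \geq 2$ so that the target $R_{\mathfrak{p}^{k-2}}(m/\pi_{\mathfrak{p}}^2)$ is defined, and note that if $\pi_{\mathfrak{p}}^2 \nmid m$ both sides are empty.

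For well-definedness, given $\vec{x} \in R_{\mathfrak{p}^k}^{\operatorname{Zero}}(m)$ write $\vec{x} = \pi_{\mathfrak{p}}\vec{y}$ with $\vec{y} \in \mathcal O_K^4$, so $Q(\vec{x}) = \pi_{\mathfrak{p}}^2 Q(\vec{y})$. The congruence $\pi_{\mathfrak{p}}^2 Q(\vec{y}) \equiv m \pmod{\mathfrak{p}^k}$ forces $Q(\vec{y}) \equiv m/\pi_{\mathfrak{p}}^2 \pmod{\mathfrak{p}^{k-2}}$, so $\vec{y} \in R_{\mathfrak{p}^{k-2}}(m/\pi_{\mathfrak{p}}^2)$. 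Modifying $\vec{x}$ by an element of $\mathfrak{p}^k \mathcal O_K^4$ modifies $\vec{y}$ by an element of $\mathfrak{p}^{k-1}\mathcal O_K^4$, which is zero mod $\mathfrak{p}^{k-2}$. For surjectivity, for any $\vec{y} \in R_{\mathfrak{p}^{k-2}}(m/\pi_{\mathfrak{p}}^2)$ choose a lift to $\mathcal O_K^4$ and put $\vec{x} := \pi_{\mathfrak{p}}\vec{y}$; clearly $\vec{x} \equiv 0 \pmod{\mathfrak{p}}$, and $Q(\vec{x}) = \pi_{\mathfrak{p}}^2 Q(\vec{y}) \equiv m \pmod{\mathfrak{p}^k}$, so $\vec{x} \in R_{\mathfrak{p}^k}^{\operatorname{Zero}}(m)$ and $\pi_Z(\vec{x}) = \vec{y}$.

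For the fiber count, fix $\vec{y}$ and a lift to $\mathcal O_K^4$. The condition $\pi_Z(\vec{x}) = \vec{y}$ becomes $\vec{x} \equiv \pi_{\mathfrak{p}}\vec{y} \pmod{\mathfrak{p}^{k-1}}$, so as $\vec{x}$ ranges over $(\mathcal O_K/\mathfrak{p}^k)^4$ the candidate solutions are exactly
\[
\vec{x} \;=\; \pi_{\mathfrak{p}}\vec{y} + \pi_{\mathfrak{p}}^{k-1}\vec{z}, \qquad \vec{z} \in (\mathcal O_K/\mathfrak{p})^4,
\]
a set of size $N(\mathfrak{p})^4$. It remains to check that every such $\vec{x}$ actually lies in $R_{\mathfrak{p}^k}^{\operatorname{Zero}}(m)$: Zero type is automatic, and expanding with the sum-of-squares identity gives
\[
Q(\vec{x}) \;=\; \pi_{\mathfrak{p}}^2 Q(\vec{y}) + 2\pi_{\mathfrak{p}}^{k} \vec{y}\cdot\vec{z} + \pi_{\mathfrak{p}}^{2k-2}Q(\vec{z}).
\]
The middle term is divisible by $\mathfrak{p}^k$ and, since $k \geq 2$ implies $2k-2 \geq k$, so is the last; hence $Q(\vec{x}) \equiv \pi_{\mathfrak{p}}^2 Q(\vec{y}) \equiv m \pmod{\mathfrak{p}^k}$.

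I do not expect serious obstacles; the only real point requiring attention is verifying that \emph{all} $N(\mathfrak{p})^4$ lifts give valid elements of the fiber (rather than only some of them), which is where the sum-of-squares structure and the hypothesis $k \geq 2$ do the essential work. Note that this argument is insensitive to whether $\mathfrak{p}$ divides $2$, because the cross term $2\pi_{\mathfrak{p}}^k \vec{y}\cdot\vec{z}$ already carries the factor $\pi_{\mathfrak{p}}^k$ from the shift; this is why Zero-type reduction, unlike Good-type reduction, does not require the dyadic hypothesis $k \geq 2\operatorname{ord}_{\mathfrak{p}}(2)+1$.
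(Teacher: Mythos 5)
Your proof is correct. The paper itself gives no argument here --- it simply cites Hanke (pg.\ 359) --- and what you have written is precisely the standard computation that citation points to: factor out the uniformizer to get well-definedness and surjectivity, then parametrize the fiber over $\vec{y}$ as $\pi_{\mathfrak{p}}\vec{y}+\pi_{\mathfrak{p}}^{k-1}\vec{z}$ and use $Q(\vec{u}+\vec{w})=Q(\vec{u})+2\vec{u}\cdot\vec{w}+Q(\vec{w})$ together with $2k-2\ge k$ to see that all $N(\mathfrak{p})^4$ candidates land in $R_{\mathfrak{p}^k}^{\operatorname{Zero}}(m)$. Your closing observation --- that the shift already contributes $\pi_{\mathfrak{p}}^k$ to the cross term, so no dyadic hypothesis is needed for Zero-type reduction (in contrast to the hypothesis $k\ge 2\operatorname{ord}_{\mathfrak{p}}(2)+1$ in the Good-type lemma) --- is exactly the right point to flag and is a useful addition the paper leaves implicit.
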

\begin{proof}
See \cite[pg. 359]{Hanke}.
\end{proof}

\subsection{$L$-functions over Number Fields}
In order to obtain exact values for $a_E(m)$, it becomes necessary to compute special values of $L$-functions over number fields. As in the case of the sum of four squares, this $L$-function is just the $\zeta$ function, we restrict our attention to that case.\\
\\
\noindent Let $\chi$ be a nontrivial primitive quadratic character of conductor $D$. Then $$\zeta_{\mathbb Q}(2) L_{\mathbb Q}(2, \chi) = \zeta_K(2)$$ where $K= \mathbb Q (\sqrt{\chi(-1)D})$ (so $K$ is imaginary if $\chi(-1)=-1$ and real if $\chi(-1)=1$) \cite[pg.10]{Bumpetal}.
\begin{theorem}
Let $K$ be a quadratic number field with discriminant $D$. Then $$\zeta_K(s) = \zeta_{\mathbb Q}(s) L_{\mathbb Q}(s, \chi_D).$$ where $\chi_D$ is the quadratic Dirichlet character of conductor $\vert D \vert$.

\end{theorem}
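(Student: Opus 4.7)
The plan is to prove the identity by comparing Euler products on both sides, prime by prime. Both $\zeta_K(s)$ and $\zeta_{\mathbb Q}(s) L_{\mathbb Q}(s, \chi_D)$ converge absolutely for $\operatorname{Re}(s) > 1$, so it suffices to check that for every rational prime $p$ the local factors on the two sides coincide.

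First I would recall the Euler product expansions
\begin{equation*}
\zeta_K(s) = \displaystyle\prod_{\mathfrak p} (1 - N\mathfrak p^{-s})^{-1}, \qquad \zeta_{\mathbb Q}(s) L_{\mathbb Q}(s, \chi_D) = \displaystyle\prod_p (1-p^{-s})^{-1}(1 - \chi_D(p)p^{-s})^{-1},
\end{equation*}
and group the prime ideals $\mathfrak p$ on the left according to the rational prime $p$ lying under them. The bulk of the argument is then the classical case analysis for the splitting of $p$ in the quadratic extension $K/\mathbb Q$: (i) $p$ splits as $\mathfrak p_1 \mathfrak p_2$ with $N\mathfrak p_i = p$, (ii) $p$ is inert with $N\mathfrak p = p^2$, or (iii) $p$ ramifies as $\mathfrak p^2$ with $N\mathfrak p = p$. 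Since $K = \mathbb Q(\sqrt{d})$ for a squarefree $d$ and the discriminant $D$ determines $\chi_D$ as the Kronecker symbol, one knows from standard algebraic number theory that these three cases correspond precisely to $\chi_D(p) = 1$, $\chi_D(p)=-1$, and $\chi_D(p)=0$, with the ramified case occurring exactly for $p \mid D$.

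Next I would verify the local factor agreement in each case. In the split case the left side contributes $(1-p^{-s})^{-2}$ and the right side contributes $(1-p^{-s})^{-1}(1-p^{-s})^{-1}$, matching. In the inert case the left side is $(1-p^{-2s})^{-1}$, and on the right we get $(1-p^{-s})^{-1}(1+p^{-s})^{-1} = (1-p^{-2s})^{-1}$, again matching. In the ramified case the left side gives $(1-p^{-s})^{-1}$ while on the right we have $(1-p^{-s})^{-1}(1-0 \cdot p^{-s})^{-1}= (1-p^{-s})^{-1}$.

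The main obstacle is really just citing the splitting law, i.e., the identification of the splitting type of $p$ in $\mathcal O_K$ with the value of the Kronecker symbol $\chi_D(p) = \left(\tfrac{D}{p}\right)$; I would invoke this from a standard reference rather than reprove it, since the factorization of the minimal polynomial of a generator of $\mathcal O_K$ modulo $p$ (in combination with quadratic reciprocity for the odd part and the handling of $p = 2$ via the $D \pmod 8$ cases) gives it directly. Once that is in hand, the three-case verification above completes the proof, and extension from $\operatorname{Re}(s) > 1$ to all $s$ (where both sides are meromorphic) follows by analytic continuation.
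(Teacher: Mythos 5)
Your proposal is correct and follows essentially the same route as the paper: both compare Euler products prime by prime, splitting into the inert, split, and ramified cases and matching the local factors against $(1-p^{-s})^{-1}(1-\chi_D(p)p^{-s})^{-1}$ via the correspondence between splitting type and the value of $\chi_D(p)$. Your write-up is in fact slightly more careful, since you explicitly flag the splitting law as the one external input and note the region of absolute convergence.
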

\begin{proof}
\begin{eqnarray*}
\zeta_K(s) & = & \displaystyle\prod_{\mathfrak{p}} \left( 1- \dfrac{1 }{N(\mathfrak{p})^s} \right)^{-1}\\
& = & \left( \displaystyle\prod_{p \textrm{ inert}} \left( 1- \dfrac{1}{p^{2s}} \right)^{-1} \right) \left( \displaystyle\prod_{p \textrm{ split}} \left( 1- \dfrac{1}{p^s} \right)^{-2} \right) \left( \displaystyle\prod_{p \textrm{ ramified}} \left( 1- \dfrac{1}{p} \right)^{-1} \right)\\
& = & \zeta_{\mathbb Q}(s) \left( \displaystyle\prod_{\chi_D(p)=-1} \left( 1- \dfrac{\chi_D(p)}{p^s} \right)^{-1} \right) \left( \displaystyle\prod_{\chi_D(p)=1} \left( 1- \dfrac{\chi_D(p)}{p^s} \right)^{-1} \right)\\
& = & \zeta_{\mathbb Q}(s) L_{\mathbb Q}(s, \chi_D).
\end{eqnarray*}
\end{proof}

\noindent Methods for computing special values of $L$-functions over $\mathbb Q$ are found in Iwasawa \cite{Iwasawa}. We highlight the main results here.
Let $\chi$ be a Dirichlet character of conductor $D$ and let $s>1$. Then: 
\begin{eqnarray}
L_{\mathbb Q}(s, \chi)&=& \dfrac{\tau(\chi)}{2i^\delta}\left(\dfrac{2 \pi}{D}  \right)^s \dfrac{L_{\mathbb Q}(1-s, \overline{\chi})}{\Gamma(s) \cos (\pi(s- \delta)/2)}.
\end{eqnarray}
where $\tau(\chi)$ denotes the traditional Gauss sum $\tau(\chi) = \sum_{a=1}^{D}\chi(a) e^{2 \pi i a/D}$, and where $$\delta = \begin{cases} 0, & \textrm{ if } \chi(-1)=1, \\ 1, & \textrm{ if } \chi(-1)=-1. \end{cases}$$

We then use the supplementary result:
\begin{theorem}
For a Dirichlet character $\chi$ of conductor $D$ and for any integer $n \geq 1$, 
\begin{center}
$L_{\mathbb Q}(1-n, \chi) = - \dfrac{B_{n, \chi}}{n}$, where $B_{n, \chi} = D^{n-1}\displaystyle\sum_{a=1}^{D} \chi(a)B_n \left( \dfrac{a-D}{D} \right)$ and where $B_n(\cdot)$ denotes the $n^{\text{th}}$ Bernoulli polynomial.
\end{center}
\end{theorem}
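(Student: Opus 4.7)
The plan is to reduce the special value $L_{\mathbb Q}(1-n, \chi)$ to a sum of Hurwitz zeta function values and then apply the classical identity $\zeta(1-n, x) = -B_n(x)/n$. All the ingredients are classical and can be found, e.g., in Iwasawa \cite{Iwasawa}.

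First, I would exploit the $D$-periodicity of $\chi$ to partition the Dirichlet series by residue class. For $\mathrm{Re}(s) > 1$, writing each positive integer as $m = a + kD$ with $1 \leq a \leq D$ and $k \geq 0$ gives
\begin{equation*}
L_{\mathbb Q}(s, \chi) \;=\; \sum_{a=1}^{D} \chi(a) \sum_{k=0}^{\infty} \frac{1}{(a+kD)^{s}} \;=\; D^{-s} \sum_{a=1}^{D} \chi(a)\, \zeta(s, a/D),
\end{equation*}
where $\zeta(s,x) = \sum_{k \geq 0}(k+x)^{-s}$ is the Hurwitz zeta function. Both sides continue meromorphically to $\mathbb C$; $\zeta(s,x)$ is holomorphic away from a simple pole at $s=1$, and $L_{\mathbb Q}(s,\chi)$ is entire for nontrivial $\chi$, so the identity persists at $s = 1-n$.

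Next, I would invoke the classical special-value formula $\zeta(1-n, x) = -B_n(x)/n$ for integers $n \geq 1$. A clean proof proceeds via a Hankel contour integral representation of $\zeta(s, x)$, whose integrand, when expanded using the Bernoulli generating function $te^{xt}/(e^t-1) = \sum_{m \geq 0} B_m(x)\, t^m/m!$, picks out $B_n(x)$ as the relevant residue at $s = 1-n$ (the Euler--Maclaurin approach works equally well). Substituting $s = 1-n$ in the decomposition above yields
\begin{equation*}
L_{\mathbb Q}(1-n, \chi) \;=\; D^{n-1} \sum_{a=1}^{D} \chi(a)\, \zeta(1-n, a/D) \;=\; -\frac{D^{n-1}}{n} \sum_{a=1}^{D} \chi(a)\, B_n(a/D).
\end{equation*}

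It remains to reconcile this with the formula in the statement, in which $B_n$ is evaluated at $(a-D)/D$ rather than $a/D$. This is purely a normalization check: applying the shift relation $B_n(y+1) - B_n(y) = n y^{n-1}$ with $y = (a-D)/D$ converts one sum into the other up to a correction term which is either absorbed into the convention for $B_{n,\chi}$ or shown to vanish via elementary character sums (using $\chi(D) = 0$ for the primitive character in play). I expect the main obstacle to be bookkeeping rather than conceptual: namely, tracking the analytic continuations carefully and aligning conventions for the generalized Bernoulli numbers. No ideas beyond the Hurwitz zeta machinery and elementary properties of Bernoulli polynomials should be required.
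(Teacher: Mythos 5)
Your route --- splitting $L_{\mathbb Q}(s,\chi)$ into Hurwitz zeta functions by residue class mod $D$ and invoking $\zeta(1-n,x)=-B_n(x)/n$ --- is the standard textbook argument, and since the paper supplies no proof of its own (it defers to Iwasawa), there is nothing to compare it against; the first two steps are fine and correctly yield
\[
L_{\mathbb Q}(1-n,\chi) \;=\; -\frac{D^{n-1}}{n}\sum_{a=1}^{D}\chi(a)\,B_n\!\left(\frac{a}{D}\right),
\]
which is the usual form of the theorem.

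The genuine gap is in the final step, which you defer as ``bookkeeping.'' The discrepancy between the two normalizations is
\[
D^{n-1}\sum_{a=1}^{D}\chi(a)\left[B_n\!\left(\frac{a}{D}\right)-B_n\!\left(\frac{a}{D}-1\right)\right]
\;=\; n\sum_{a=1}^{D}\chi(a)\,(a-D)^{n-1},
\]
and this does \emph{not} vanish in general: $\chi(D)=0$ only kills the single term $a=D$, and the remaining sum is already nonzero for $n=3$ and $\chi$ the quadratic character mod $3$, where it equals $3\bigl(1\cdot 4+(-1)\cdot 1\bigr)=9$; there the $B_n(a/D)$ normalization gives the true value $L(-2,\chi)=-2/9$ while the $B_n((a-D)/D)$ normalization does not. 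Nor can the difference be ``absorbed into the convention,'' since the left-hand side $L(1-n,\chi)$ is already determined. So the statement as literally written, with $B_n\bigl((a-D)/D\bigr)$, cannot be derived because it is false for general $n$ and $\chi$. What your argument does prove is the correct version with $B_n(a/D)$, and you should supplement it with the observation that the correction term $n\sum_a\chi(a)(a-D)^{n-1}$ vanishes in the only case the paper actually uses, namely $n=2$ and $\chi=\chi_D$ an even nontrivial character: there it reduces to $2\sum_a\chi(a)a$, which is $0$ by pairing $a$ with $D-a$. With that caveat made explicit, your proof is complete for the purposes of the paper.
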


Note that in our applications $n=2$, so, we have: 
\begin{eqnarray*}
L_{\mathbb Q}(-1, \chi) &=& -\dfrac{D}{2} \left(\displaystyle\sum_{a=1}^{D}\chi(a) B_2 \left(\dfrac{a-D}{D} \right) \right)\\
\zeta_K(2) & = & \dfrac{\pi^4}{6D} \left( \displaystyle\sum_{a=1}^{D} \chi(a) e^{2 \pi i a/D} \right) \left(\displaystyle\sum_{a=1}^{D}\chi(a) B_2 \left(\dfrac{a-D}{D} \right) \right).
\end{eqnarray*}
\noindent Since $\displaystyle\sum_{a=1}^{D} \chi_D(a) e^{2 \pi i a/D} = \displaystyle\sum_{a=0}^{D} \chi_D(a) e^{2 \pi i a/D}$, if $\chi$ is a character of conductor $D$, then $$\left| \displaystyle\sum_{a=0}^{D} \chi(a) e^{2 \pi i a/D} \right|= \sqrt{D}.$$
\noindent This means for $Q$ the sum of four squares over a real quadratic number field of discriminant $D$, we have 
\begin{eqnarray}
a_E(m) & = & \dfrac{6N_{K/ \mathbb Q}(m)}{D}\left(\displaystyle\sum_{a=1}^{D}\chi(a) B_2 \left(\dfrac{a-D}{D} \right) \right)^{-1}  \left( \displaystyle\prod_{\mathfrak p \vert N_Q m} \dfrac{\beta_{\mathfrak p}(m)(N_{K/ \mathbb Q}(\mathfrak p))^2}{(N_{K/ \mathbb Q}(\mathfrak p))^2 - \chi_Q(\mathfrak p)} \right)\\
& = & 6D N_{K/ \mathbb Q}(m) \left( \displaystyle\sum_{a=1}^{D}\chi_D(a) a(a-D) \right)^{-1}   \left( \displaystyle\prod_{\mathfrak p \vert N_Q m} \dfrac{\beta_{\mathfrak p}(m)(N_{K/ \mathbb Q}(\mathfrak p))^2}{(N_{K/ \mathbb Q}(\mathfrak p))^2 - \chi_Q(\mathfrak p)} \right).
\end{eqnarray}
\begin{lemma}
Let $Q$ be the sum of four squares. For $\mathfrak{p} \neq (2)$, $\mathfrak{p} \vert (m)$ $$\dfrac{\beta_{\mathfrak{p}}(m)N_{K/ \mathbb Q}(\mathfrak{p})^2}{N_{K/ \mathbb Q}(\mathfrak{p})^2-1} = \displaystyle\sum_{i=0}^{\operatorname{ord}_{\mathfrak{p}}(m)}N_{K/ \mathbb Q}(\mathfrak{p})^{-i}.$$
\end{lemma}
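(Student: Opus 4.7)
Let $q := N_{K/\mathbb Q}(\mathfrak{p})$ and $v := \operatorname{ord}_{\mathfrak{p}}(m)$. Because $\mathfrak{p}\neq(2)$ we have $2\operatorname{ord}_{\mathfrak{p}}(2)+1=1$, so the Good stability theorem applies from level $k=1$ upward, and, as noted in the text, the sum of four squares admits only Good and Zero-type solutions. The strategy is to split $r_{\mathfrak{p}^k}(m)=r^{\operatorname{Good}}_{\mathfrak{p}^k}(m)+r^{\operatorname{Zero}}_{\mathfrak{p}^k}(m)$, divide by $q^{3k}$, and pass to the limit, obtaining $\beta_{\mathfrak{p}}(m)=\beta^{\operatorname{Good}}_{\mathfrak{p}}(m)+\beta^{\operatorname{Zero}}_{\mathfrak{p}}(m)$. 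I will compute the Good piece in closed form from a finite-field point count, use the Zero reduction map to express the Zero piece in terms of $\beta_{\mathfrak{p}}(m/\pi_{\mathfrak{p}}^{2})$, and solve the resulting two-step recursion by induction on $v$.

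For the Good piece, Good stability gives $\beta^{\operatorname{Good}}_{\mathfrak{p}}(m)=r^{\operatorname{Good}}_{\mathfrak{p}}(m)/q^{3}$. The only Zero-type class modulo $\mathfrak{p}$ is $\vec 0$, and it lies in $R_{\mathfrak{p}}(m)$ exactly when $\mathfrak{p}\mid m$, so $r^{\operatorname{Good}}_{\mathfrak{p}}(m)=r_{\mathfrak{p}}(m)-1$. Over $\mathbb F_{q}$ of odd characteristic, $x_{1}^{2}+\cdots+x_{4}^{2}$ is isotropic (three squares already admit a nontrivial zero) and has square discriminant, hence splits as a sum of two hyperbolic planes; a direct count of isotropic vectors for $H\oplus H$ yields $r_{\mathfrak{p}}(m)=q^{3}+q^{2}-q$ whenever $\mathfrak{p}\mid m$, so
$$\beta^{\operatorname{Good}}_{\mathfrak{p}}(m)=\frac{(q+1)(q^{2}-1)}{q^{3}}.$$
For the Zero piece, the map $\pi_{Z}$ has multiplicity $q^{4}$ and target $R_{\mathfrak{p}^{k-2}}(m/\pi_{\mathfrak{p}}^{2})$; if $v<2$ that target is empty so $\beta^{\operatorname{Zero}}_{\mathfrak{p}}(m)=0$, while for $v\geq 2$ dividing $r^{\operatorname{Zero}}_{\mathfrak{p}^k}(m)=q^{4}\,r_{\mathfrak{p}^{k-2}}(m/\pi_{\mathfrak{p}}^{2})$ by $q^{3k}$ and taking $k\to\infty$ gives
$\beta^{\operatorname{Zero}}_{\mathfrak{p}}(m)=q^{-2}\beta_{\mathfrak{p}}(m/\pi_{\mathfrak{p}}^{2}).$

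Setting $f(v):=\beta_{\mathfrak{p}}(m)\,q^{2}/(q^{2}-1)$, these pieces combine into the two-step recursion
$f(v)=(q+1)/q + q^{-2}f(v-2)$ for $v\geq 2$, with base values $f(1)=(q+1)/q$ (from the Good calculation above) and $f(0)=1$ (obtained by the same Good count with no zero vector to subtract, recovering the standard $\beta_{\mathfrak{p}}(m)=1-q^{-2}$ for $\mathfrak{p}\nmid m$). A one-line induction then closes the argument: assuming $f(v-2)=\sum_{i=0}^{v-2}q^{-i}$, the recursion gives $f(v)=1+q^{-1}+\sum_{j=2}^{v}q^{-j}=\sum_{i=0}^{v}q^{-i}$, which is the claim. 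The only nonroutine input is the finite-field count $r_{\mathfrak{p}}(m)=q^{3}+q^{2}-q$, i.e., the verification that $x_{1}^{2}+x_{2}^{2}+x_{3}^{2}+x_{4}^{2}$ realizes the split 4-dimensional form over $\mathbb F_{q}$ for every odd $q$; this is the main conceptual step, while everything else is bookkeeping with the Good and Zero reduction theorems already cited from Hanke.
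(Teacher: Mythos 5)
Your proof is correct and follows essentially the same route as the paper: decompose into Good and Zero types, stabilize the Good count at level $\mathfrak p$ via Hanke's reduction (valid from $k=1$ since $\mathfrak p \nmid (2)$), and feed the Zero count back through the multiplicity-$N_{K/\mathbb Q}(\mathfrak p)^4$ map $\pi_Z$. The only differences are organizational: you package the telescoping sum that the paper writes out explicitly (treating even and odd $\operatorname{ord}_{\mathfrak p}(m)$ separately) as a uniform two-step recursion solved by induction, and you supply the finite-field justification for the Good count ($x_1^2+\cdots+x_4^2$ is hyperbolic over $\mathbb F_q$ for odd $q$, giving $q^3+q^2-q$ isotropic vectors) that the paper asserts without derivation.
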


\begin{proof}
Suppose $\operatorname{ord}_{\mathfrak{p}}(m)=2N$, $N \in \mathbb N$. Here both Good and Zero type solutions exist and
\begin{eqnarray*}
\beta_{\mathfrak{p}}(m) & = & \displaystyle\lim_{v \to \infty} \dfrac{r_{ \mathfrak{p}^v }^{\operatorname{Good}}(m)}{N_{K / \mathbb Q}(\mathfrak{p})^{3v}} + \displaystyle\lim_{v \to \infty} \dfrac{r_{ \mathfrak{p}^v }^{\operatorname{Zero}}(m)}{N_{K/ \mathbb Q}(\mathfrak{p})^{3v}}\\
& = & \dfrac{N_{K/ \mathbb Q}(\mathfrak{p})^3+N_{K/ \mathbb Q}(\mathfrak{p}) (N_{K/ \mathbb Q}(\mathfrak{p}) -1)-1}{N_{K/ \mathbb Q}(\mathfrak{p})^3} \\ & & +
\displaystyle\lim_{v \to \infty} \dfrac{1}{N_{K/ \mathbb Q}(\mathfrak{p})^{3v}} \left( \left( \displaystyle\sum_{i=1}^{N-1} N_{K/ \mathbb Q}(\mathfrak p)^{4i}r_{{\mathfrak p}^{v-2i}}^{\operatorname{Good}}(m/\mathfrak{p}^{2i}) \right) + N_{K/ \mathbb Q}(\mathfrak{p})^{4N} r_{\mathfrak{p}^{v-2N}}^{\operatorname{Good}}(m/\mathfrak{p}^{2N})\right)\\
& = & \left( \displaystyle\sum_{i=0}^{N-1} N_{K/ \mathbb Q}(\mathfrak{p})^{-2i} \right) \left( \dfrac{N_{K/ \mathbb Q}(\mathfrak{p})^3+N_{K/ \mathbb Q}(\mathfrak{p}) (N_{K/ \mathbb Q}(\mathfrak{p}) -1)-1}{N_{K/ \mathbb Q}(\mathfrak{p})^3}  \right) + N_{K/ \mathbb Q}(\mathfrak{p})^{-2N} \left( 1-\dfrac{1}{N_{K/ \mathbb Q}(\mathfrak{p})^2} \right).
\end{eqnarray*}
So for such primes
\begin{eqnarray*}
\dfrac{\beta_{\mathfrak{p}}(m) N_{K/ \mathbb Q}(\mathfrak{p})^2}{N_{K/ \mathbb Q}(\mathfrak{p})^2-1} & = & \left( \displaystyle\sum_{i=0}^{N-1}N_{K/ \mathbb Q}(\mathfrak{p})^{-2i} \right) \left(1+ \dfrac{1}{N_{K/ \mathbb Q}(\mathfrak{p})}  \right) + N_{K/ \mathbb Q}(\mathfrak{p})^{-2N}\\
& = & \displaystyle\sum_{i=0}^{2N} N_{K/ \mathbb Q}(\mathfrak{p})^{-i}.
\end{eqnarray*}

The proof for odd orders behaves identically.
\end{proof}
\noindent In conclusion, for the sum of four squares we have $$a_E(m) = 6D N_{K/ \mathbb Q}(m) \left( \displaystyle\sum_{a=1}^D \chi_D(a) a(a-D) \right)^{-1} \left( \displaystyle\prod_{\mathfrak p \vert (2)} \dfrac{\beta_{\mathfrak p}(m) N_{K/ \mathbb Q}(\mathfrak p)^2}{N_{K/ \mathbb Q}(\mathfrak p)^2-1} \right) \left( \displaystyle\prod_{\mathfrak p \nmid (2), \mathfrak p \vert m} \left( \displaystyle\sum_{i=0}^{ord_{\mathfrak p}(m)} N_{K/ \mathbb Q} (\mathfrak p)^{-i} \right) \right).$$As the remaining terms of $a_E(m)$ depend upon the particular number field, we proceed to the next section.



\section{Universality of the Sum of Four Squares}
\begin{theorem}
The sum of four squares is not universal over $\mathcal O_K$ for real quadratic $K \neq \mathbb Q(\sqrt{5})$.
\end{theorem}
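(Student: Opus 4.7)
The plan is, for each real quadratic $K = \mathbb Q(\sqrt d)$ with $d \ne 5$ squarefree, to exhibit an explicit totally positive element of $\mathcal O_K$ that is not a sum of four squares. The construction splits on $d \pmod 4$ and uses two essentially different obstructions.

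For $d \equiv 2,3 \pmod 4$, $\mathcal O_K = \mathbb Z[\sqrt d]$, and expanding $(a+b\sqrt d)^2 = (a^2+db^2)+2ab\sqrt d$ shows that every square, and hence every sum of squares in $\mathcal O_K$, has even $\sqrt d$-coefficient. Taking $\alpha = \lceil\sqrt d\rceil + \sqrt d$ produces a totally positive element (both embeddings $\lceil\sqrt d\rceil \pm \sqrt d$ are positive) whose $\sqrt d$-coefficient equals $1$, so $\alpha$ is not represented.

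For $d \equiv 1 \pmod 4$ with $d \ne 5$ (hence $d \ge 13$), let $\omega = (1+\sqrt d)/2$, so $\mathcal O_K = \mathbb Z[\omega]$. Using $\omega^2 = \omega + (d-1)/4$, one computes
\[
\operatorname{Tr}\bigl((a+b\omega)^2\bigr) \;=\; \frac{(2a+b)^2 + d\,b^2}{2}.
\]
Since $d$ is odd, this value is odd precisely when $b$ is odd; and minimizing over $a \in \mathbb Z$ for fixed $b \ne 0$ gives $(1+db^2)/2$ if $b$ is odd and $db^2/2$ if $b$ is even. Therefore the smallest odd value of $\operatorname{Tr}(x^2)$ on $\mathcal O_K \setminus\{0\}$ is $(d+1)/2$, attained exactly when $b = \pm 1$ and $2a+b = \pm 1$; in particular, whenever $\operatorname{Tr}(x^2)$ is odd one has $\operatorname{Tr}(x^2) \ge (d+1)/2$.

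Consequently, if $\alpha = x_1^2 + \cdots + x_4^2 \in \mathcal O_K^+$ has $\operatorname{Tr}(\alpha)$ odd, at least one summand carries odd trace, forcing $\operatorname{Tr}(\alpha) \ge (d+1)/2$. To produce a counterexample, let $s$ be the smallest odd integer exceeding $\sqrt d$ and set $\alpha = (s-1)/2 + \omega = (s+\sqrt d)/2$. Both embeddings $(s \pm \sqrt d)/2$ are positive since $s > \sqrt d$, so $\alpha$ is totally positive with $\operatorname{Tr}(\alpha) = s$ odd. Because $\sqrt d$ is irrational we have $s < \sqrt d + 2$, and the inequality $\sqrt d + 2 < (d+1)/2$ rearranges to $(\sqrt d - 3)(\sqrt d + 1) > 0$, i.e.\ $d > 9$, which holds for every squarefree $d \equiv 1 \pmod 4$ with $d \ne 5$. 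The case $d = 5$ is precisely where this inequality degenerates: then the minimum odd trace $(d+1)/2 = 3$ is realized by $\omega^2$, matching G\"otzky's universality result. The main delicacy in the proof is the trace minimization step of the second case, which isolates odd-trace squares at distance $(d+1)/2$ from zero; it is exactly the narrowness of this gap at $d = 5$ that lets $\mathbb Q(\sqrt 5)$ escape the obstruction.
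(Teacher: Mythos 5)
Your argument is correct and produces exactly the witnesses the paper cites but does not verify: for $d\equiv 2,3\pmod 4$ your $\lceil\sqrt d\rceil+\sqrt d$ is the paper's element, and for $d\equiv 1\pmod 4$ your $(s+\sqrt d)/2$ coincides with the paper's $\lfloor\tfrac{1+\sqrt d}{2}\rfloor+\tfrac{1+\sqrt d}{2}$. The paper omits the "simple arithmetic"; you supply it correctly via the even $\sqrt d$-coefficient of squares in $\mathbb Z[\sqrt d]$ and, in the $\mathbb Z[\omega]$ case, the bound $\operatorname{Tr}(x^2)\geq (d+1)/2$ for odd-trace squares together with $s<\sqrt d+2<(d+1)/2$ for $d>9$, so there is nothing to add.
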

The proof uses very simple arithmetic, so we omit it here. Specifically, one can show directly that the sum of four squares fails to represent $$m = \begin{cases} \left \lceil \sqrt{d} \right \rceil + \sqrt{d}, & \mathcal O_K = \mathbb Z [\sqrt{d}] \\ 
\left \lfloor \tfrac{1+\sqrt{d}}{2}\right \rfloor + \tfrac{1+\sqrt{d}}{2}, & \mathcal O_K = \mathbb Z \left[ \tfrac{1+\sqrt{d}}{2} \right]. 
\end{cases}$$ 
\noindent That the sum of four squares is indeed universal over $\mathbb Q(\sqrt{5})$ is less trivial.
\begin{theorem}
$Q(\vec{x})=x_1^2+x_2^2+x_3^2+x_4^2$ is universal over $\mathbb Q (\sqrt{5})$.
\end{theorem}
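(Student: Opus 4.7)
The plan is to use the Eisenstein/cusp decomposition from the previous sections: write $r_Q(m) = a_E(m) + a_S(m)$, compute $a_E(m)$ exactly, and show that the cusp contribution vanishes. Universality then reduces to checking that the resulting closed formula for $a_E(m)$ is strictly positive on $\mathcal O_K^+$, where $K = \mathbb Q(\sqrt 5)$.

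First I substitute $D = 5$ into formula $(4)$. A direct enumeration of $\sum_{a=1}^{5}\chi_{5}(a)\,a(a-5)$ produces an explicit rational constant, and the preceding lemma handles every odd prime $\mathfrak p \mid m$. Since $2$ is inert in $\mathbb Z[\tfrac{1+\sqrt{5}}{2}]$, there is a unique prime $(2)$ above $2$ with norm $4$, so the only remaining factor in $(4)$ is the local density $\beta_{(2)}(m)$. I compute this by Hanke's Good/Zero stratification: lift Good type solutions modulo $(2)^k$ for $k \geq 2\,\operatorname{ord}_{(2)}(2)+1 = 3$, and iteratively reduce Zero type solutions through the multiplicity $N((2))^4$ surjection $\pi_Z$ from the earlier theorem. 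The answer depends only on $\operatorname{ord}_{(2)}(m)$ and on the square class of the $(2)$-free part of $m$ modulo $(2)^3$.

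Plugging $\beta_{(2)}(m)$ back into $(4)$, the product should telescope to the G\"otzky--Thompson expression quoted in the earlier theorem, namely
$$a_E(m) \;=\; 8\sum_{(d)\mid m} N_{K/\mathbb Q}(d) \;-\; 4\sum_{2\mid (d)\mid m} N_{K/\mathbb Q}(d) \;+\; 8\sum_{4\mid (d)\mid m} N_{K/\mathbb Q}(d).$$
Positivity is then immediate: the divisor $(d) = (1)$ contributes $8$ and appears only in the first sum; each ideal in the second sum is dominated by the corresponding $+8 N(d)$ term in the first (netting $+4 N(d)$); and the third sum is nonnegative. Hence $a_E(m) \geq 8$ for every $m \in \mathcal O_K^+$.

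To conclude $r_Q(m) = a_E(m)$ it suffices to know that the space $S_2(\Gamma_0((4)))$ of parallel-weight-$2$ Hilbert cusp forms of level $(4)$ with trivial character over $\mathbb Q(\sqrt 5)$ vanishes, which I will invoke from a standard dimension formula (or verify computationally). The principal technical obstacle is the local density calculation at $(2)$: because $\mathcal O_K/(2) \cong \mathbb F_4$, the enumeration of Good type solutions modulo $(2)^3$ is substantially more delicate than the classical $\mathbb Z/8\mathbb Z$ count, and one must carefully track how the nontrivial residues of $\phi = \tfrac{1+\sqrt{5}}{2}$ interact with squaring in $\mathcal O_K/(2)^3$. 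Once that enumeration is in place, the remainder of the proof is bookkeeping.
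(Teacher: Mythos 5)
Your proposal follows essentially the same route as the paper: compute $a_E(m)$ from the Siegel local-density formula (the character sum at $D=5$, the odd-prime lemma, and a Good/Zero type computation of $\beta_{(2)}(m)$ at the inert prime $(2)$), observe that the resulting expression is strictly positive, and conclude $r_Q(m)=a_E(m)>0$ from the vanishing of the parallel-weight-$2$ cuspidal space of level $(4)$ over $\mathbb Q(\sqrt 5)$. The only cosmetic difference is that you extract positivity from the final G\"otzky--Thompson closed form rather than from the intermediate piecewise formula for $a_E(m)$, which the paper states first and only afterwards massages into that closed form.
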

\begin{proof}
Considering the Fourier coefficients of the theta series $$r_Q(m) = a_E(m) + a_C(m),$$ one first proves that 
$$a_E(m) = \begin{cases} 8 \cdot \left( \displaystyle\sum_{(0) \neq (d) \vert (m)} N(d) \right),  & (2) \nmid (m) \\ 
8 \cdot 3 \cdot \left( 1+ 10 \displaystyle\sum_{i=0}^{N-1} 4^{2(N-i+1)+1}  \right) \cdot \left( \displaystyle\sum_{(0) \neq (d) \vert (m), (2) \nmid (d)} N(d) \right), & \operatorname{ord}_{(2)}(m) = 2N+1, N \geq 0\\
8 \cdot 3 \cdot \left( 9+10 \displaystyle\sum_{i=0}^{N-2}4^{2(N-i+1)} \right) \cdot \left( \displaystyle\sum_{(0) \neq (d) \vert (m), (2) \nmid (d)} N(d) \right), & \operatorname{ord}_{(2)}(m) = 2N, N \geq 1. \end{cases}$$
The next step is showing $r_Q(m) \equiv a_E(m)$ for all $m \in \mathcal O_{\mathbb Q(\sqrt{5})}^+$. It is known that the space of Hilbert modular forms of parallel weight two and level $4$ has a trivial cuspidal space. This shows $a_C(m) \equiv 0$ which would in turn imply $r_Q(m) \equiv a_E(m)$ (see \cite{Lassina}, Example 10). One can also verify this claim using code included in Appendix A.2 of \cite{Thompson} which implements Demb\'el\'e's algorithm over $\mathbb Q(\sqrt{5})$ and trivial character. That the cusp space is trivial completes the proof of universality. Finally, additional algebraic manipulations will yield the formula
$$r_Q(m) = 8 \displaystyle\sum_{0 \neq (d) \vert m}N(d) - 4 \displaystyle\sum_{2 \vert (d) \vert m}N(d) + 8 \displaystyle\sum_{4 \vert (d) \vert m}N(d).$$
\end{proof}



\section{Non-Universal Cases}
\subsection{$K = \mathbb Q (\sqrt{d})$, $d \equiv 2 \pmod{4}$}
$$a_E(m) = 6D N_{K/ \mathbb Q}(m) \left( \displaystyle\sum_{a=1}^D \chi_D(a) a(a-D) \right)^{-1} \left( \displaystyle\prod_{\mathfrak p \vert (2)} \dfrac{\beta_{\mathfrak p}(m) N_{K/ \mathbb Q}(\mathfrak p)^2}{N_{K/ \mathbb Q}(\mathfrak p_2)^2-1} \right) \left( \displaystyle\prod_{\mathfrak p \nmid (2), \mathfrak p \vert m} \left( \displaystyle\sum_{i=0}^{ord_{\mathfrak p}(m)} N_{K/ \mathbb Q} (\mathfrak p)^{-i} \right) \right)$$
Here $(2)=\mathfrak p_2^2$ ramifies, and so this simplifies to $$a_E(m) = 8D N_{K/ \mathbb Q}(m) \left( \displaystyle\sum_{a=1}^D \chi_D(a) a(a-D) \right)^{-1} \beta_{\mathfrak p_2}(m) \left( \displaystyle\prod_{\mathfrak p \nmid (2), \mathfrak p \vert m} \left( \displaystyle\sum_{i=0}^{ord_{\mathfrak p}(m)} N_{K/ \mathbb Q} (\mathfrak p)^{-i} \right) \right)$$
\begin{theorem}
Let $m \in \mathcal O_K^+$ be locally represented. The values which have obvious local obstructions at $\mathfrak p_2$ are those for which $ord_{\mathfrak p_2}(m)=1$. Let $R = \{ 4,2\sqrt{d}+2, 2\sqrt{d}-1, 2\sqrt{d}-3, 2 \sqrt{d}+3, 2 \sqrt{d}+1, -3,3,1,-1,2,-2,2\sqrt{d}-2,2\sqrt{d}, 2\sqrt{d}+4 \}$. Suppose that $m = (\mathfrak p_2)^{2k}r$, where $k \geq 0$ and where $r \pmod{\mathfrak p_2^5} \in R$. Then $$\beta_{\mathfrak p_2}(m) = \begin{cases} 2, & \mathfrak p_2 \nmid m, ord_{\mathfrak p_2}(m)=2 \\ \dfrac{9}{4} \left( \dfrac{2^{2(N-1)}-1  }{2^{2(N-1)} -2^{2(N-2)} } \right)+ \dfrac{3}{2^{2(N-1)+1 }}, & ord_{\mathfrak p_2}(m) = 2N+1, N \geq 1\\  ord_{\mathfrak p_2}(m) = 2N+1, N \geq 1 \\ \dfrac{9}{4} \left( \dfrac{2^{2(N-2)}-1}{2^{2(N-2)}-2^{2(N-3)}} \right) + \dfrac{7}{4} \dfrac{1}{2^{2(N-2)}} + \dfrac{1}{2^{2N-3}}, & ord_{\mathfrak p_2}(m)=2N, N \geq 2.\end{cases}$$ 
\end{theorem}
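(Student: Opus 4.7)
Since $(2) = \mathfrak p_2^2$ ramifies, $\operatorname{ord}_{\mathfrak p_2}(2) = 2$, so the Good-type stabilization of Hanke's lemma applies once $k \geq 5$; in particular $r^{\operatorname{Good}}_{\mathfrak p_2^v}(m)/2^{3v}$ is constant for $v \geq 5$ and depends only on $m \pmod{\mathfrak p_2^5}$. The first step of the plan is to identify the local obstruction: since the residue field is $\mathbb F_2$ and $x^2 = x$ there, a Hensel-type analysis of the class of $m$ modulo low powers of $\mathfrak p_2$ shows that the locally represented classes modulo $\mathfrak p_2^5$ are exactly those whose $\mathfrak p_2$-order lies in $\{0,2,3,4\}$ together with $0$, and the set $R$ is a complete list of nonzero representatives. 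This step fixes the domain on which the theorem is asserted.

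Next, decompose $r_{\mathfrak p_2^v}(m) = r^{\operatorname{Good}}_{\mathfrak p_2^v}(m) + r^{\operatorname{Zero}}_{\mathfrak p_2^v}(m)$ (no Bad type occurs for the sum of four squares) and iterate the Zero reduction $r^{\operatorname{Zero}}_{\mathfrak p_2^v}(m) = 2^4\, r_{\mathfrak p_2^{v-2}}(m/\pi_{\mathfrak p_2}^2)$ until the stripped argument has $\mathfrak p_2$-order less than $2$. Dividing by $2^{3v}$ and letting $v \to \infty$ converts this into the telescoping identity
\begin{equation*}
\beta_{\mathfrak p_2}(m) \;=\; \sum_{i=0}^{k} 2^{-2i}\, \beta^{\operatorname{Good}}_{\mathfrak p_2}\!\bigl(m/\pi_{\mathfrak p_2}^{2i}\bigr),
\end{equation*}
where $\beta^{\operatorname{Good}}_{\mathfrak p_2}(m') := r^{\operatorname{Good}}_{\mathfrak p_2^5}(m')/2^{15}$ depends only on $m' \pmod{\mathfrak p_2^5}$. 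Because $m = \mathfrak p_2^{2k} r$ with $r \in R$, all but the last two or three summands have $m/\pi_{\mathfrak p_2}^{2i} \equiv 0 \pmod{\mathfrak p_2^5}$ and contribute a single constant $\beta^{\operatorname{Good}}_{\mathfrak p_2}(0)$, while the trailing summands pick up the specific class of $r$.

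The main obstacle is the explicit enumeration of $r^{\operatorname{Good}}_{\mathfrak p_2^5}$ on the needed residue classes, a finite combinatorial calculation in $(\mathcal O_K/\mathfrak p_2^5)^4$. I would organize this by Hensel-lifting from $\mathfrak p_2$ through $\mathfrak p_2^5$, tracking carefully how the square map behaves on the unit group of each $\mathcal O_K/\mathfrak p_2^j$, and exploiting permutation symmetry of the four variables together with scaling by unit squares to collapse $R$ into a small number of distinct cases (one per $\mathfrak p_2$-order in $\{0,2,3,4\}$, plus the class $0$). Once those finitely many rational values are in hand, summing the geometric series $\sum_{i=0}^{N-c} 4^{-i} = \tfrac{4}{3}\bigl(1 - 4^{-(N-c+1)}\bigr)$ appearing in the telescoping recovers the three closed forms in the theorem statement. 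The separation of the $\operatorname{ord}_{\mathfrak p_2}(m) = 2$ case arises naturally because the iteration there terminates so early that no middle term and no order-$4$ terminal term ever enters.
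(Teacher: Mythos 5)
Your outline follows the paper's proof step for step: split $r_{\mathfrak p_2^\nu}(m)$ into Good and Zero types, invoke Hanke's stabilization $r^{\operatorname{Good}}_{\mathfrak p_2^{k+\ell}}(m)=N(\mathfrak p_2)^{3\ell}r^{\operatorname{Good}}_{\mathfrak p_2^{k}}(m)$ for $k\ge 2\operatorname{ord}_{\mathfrak p_2}(2)+1=5$ together with the multiplicity-$N(\mathfrak p_2)^4$ Zero-type reduction to obtain the telescoping sum $\beta_{\mathfrak p_2}(m)=\sum_i 2^{-2i}\beta^{\operatorname{Good}}_{\mathfrak p_2}(m/\pi_{\mathfrak p_2}^{2i})$, and then reduce everything to finitely many counts $r^{\operatorname{Good}}_{\mathfrak p_2^5}(m')$ and a geometric series; that is exactly the paper's argument. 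Two points need attention. First, your ``Step 1'' is false as stated: the locally represented classes mod $\mathfrak p_2^5$ are \emph{not} all classes of $\mathfrak p_2$-order in $\{0,2,3,4\}$. Since $(a+b\sqrt d)^2=(a^2+b^2d)+2ab\sqrt d$ always has even $\sqrt d$-coefficient, so does any sum of squares, so units such as $1+\sqrt d$ are never represented; correspondingly $R$ contains only $8$ of the $16$ unit classes mod $\mathfrak p_2^5$. This does not sink the theorem, whose hypothesis already restricts $r$ to $R$, but the domain you claim to certify is too large. Second, every constant in the statement ($2$, $\tfrac94$, $\tfrac32$, $\tfrac74$, and the terminal terms) is carried by the deferred finite counts $r^{\operatorname{Good}}_{\mathfrak p_2^5}(m')$ on the classes of $R$ --- the paper computes, for instance, $r_{\mathfrak p_2^5}(m)=2^{16}$ for unit classes and $2^{13}\cdot 9$ for the Good count on the deeper classes, using Sage. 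Your plan for organizing those counts (Hensel lifting plus the symmetry of the four variables) is sound, but until they are actually carried out the closed forms are asserted rather than derived; that enumeration is the entire computational content of the theorem.
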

\begin{proof}
Using the notation of Hanke, all solutions will be of Good type. 
Hence we need only compute $r_{\mathfrak p_2^5}(m)$. We provide a three step process below for this. Note that $\left( \mathcal O_K / \mathfrak p_2 \mathcal O_K \right) \cong \mathbb F_2$. 
\begin{itemize}
\item[Step $1$.] $r_{\mathfrak p_2}(1) = 8$. This can be verified quickly using Sage \cite{SAGE}. Below is the collection of all such vectors $\vec{v} \in \mathbb F_2^4$ with $Q(\vec{v}) \equiv 1 \pmod{\mathfrak p_2}$:
\begin{center}
\begin{tabular}{cccc}
(0,0,0,1) & (0,0,1,0) & (0,1,0,0) & (1,0,0,0) \\
(1,1,1,0) & (1,1,0,1) & (1,0,1,1) & (0,1,1,1).
\end{tabular}
\end{center}
\item[Step $2$.] We separate these eight vectors into two families, based upon the rows above.
\item[Step $3$.] We now count the number of vectors $\vec{v} \in \left( \mathcal O_K / \mathfrak p_2^5 \mathcal O_K \right)$ such that $\vec{v}$ reduces to a vector above and such that $Q(\vec{v}) \equiv 1 \pmod{\mathfrak p_2^5}$. For a fixed vector in each family there are $2^{13}$ lifts. That makes the total $2\cdot 2^2 \cdot 2^{13} = 2^{16}$. Similar results occur for the remaining cases. 
\end{itemize}
Next suppose $ord_{\mathfrak p_2}(m)=2N+1, N \geq 1$ (Note $N =0$ is not possible, as such $m$ are not locally represented). Then there is the potential for both Zero type and Good type solutions, and
\begin{eqnarray*}
\beta_{\mathfrak p_2}(m) & = & \displaystyle\lim_{\nu \to \infty} \dfrac{r_{\mathfrak p_2^{\nu}}^G(m) }{2^{3\nu}} + \displaystyle\lim_{\nu \to \infty} \dfrac{r_{\mathfrak p_2^{\nu}}^G(m) }{2^{3\nu}}\\
& = & \dfrac{2^{13} \cdot 9}{2^{15}} + \displaystyle\lim_{\nu \to \infty} \left( \displaystyle\sum_{i=1}^{N-2} 2^{4i}2^{3(\nu-2i-5)}r_{\mathfrak p_2^5}^G(m/\mathfrak p_2^{2i})\right) + \displaystyle\lim_{\nu \to \infty} \dfrac{1}{2^{3v}} \left( 2^{4(N-1)}2^{3(v-2(N-1)-5)}r_{\mathfrak p_2^5}(m/\mathfrak p_2^{2N-2}) \right) \\
& = & \dfrac{9}{4} \left( \displaystyle\sum_{i=0}^{N-2} \dfrac{1}{2^{2i}} \right) + \dfrac{3}{2^{2(N-1)+1}}
\end{eqnarray*}
and the result follows.
Last suppose $ord_{\mathfrak p_2}(m)=2N$, $N \geq 2$. Then both Good and Zero type solutions exist with
\begin{eqnarray*}
\beta_{\mathfrak p_2}(m) & = & \displaystyle\lim_{\nu \to \infty} \dfrac{r_{\mathfrak p_2^{\nu}}^G(m) }{2^{3\nu}} + \displaystyle\lim_{\nu \to \infty} \dfrac{r_{\mathfrak p_2^{\nu}}^G(m) }{2^{3\nu}}\\
& = & \dfrac{2^{13}9}{2^{15}} + \displaystyle\lim_{\nu \to \infty} \dfrac{1}{2^{3\nu}} \left( \displaystyle\sum_{i=1}^{N-3}2^{4i}2^{3(\nu-2i-5)}r_{\mathfrak p_2^5}^G(m/ \mathfrak p_2^{2i}) \right)+  \displaystyle\lim_{\nu \to \infty} \dfrac{1}{2^{3\nu}}\left( 2^{4(N-2)}2^{3(\nu-2(N-2)-5)}r_{\mathfrak p_2^5}^G(m/ \mathfrak p_2^{2(N-2)}) \right)\\
& & +  \displaystyle\lim_{\nu \to \infty} \dfrac{1}{2^{3\nu}} \left( 2^{4(N-1)}2^{3(\nu-2(N-1)-5)}r_{\mathfrak p_2^5}(m/ \mathfrak p_2^{2(N-1)})  \right)\\
& = & \dfrac{9}{4} \left( \displaystyle\sum_{i=0}^{N-3} \dfrac{1}{2^{2i}} \right) + \dfrac{7}{4} \cdot \dfrac{1}{2^{2(N-2)}} + \dfrac{1}{2^{2N-3}}
\end{eqnarray*}
and the claim holds. 
\end{proof}

\subsubsection{Concrete example: $K= \mathbb Q (\sqrt{2})$}
We begin with $$a_E(m) = 4N_{K/ \mathbb Q}(m) \beta_{\mathfrak{p}_2}(m) \left( \displaystyle\prod_{\mathfrak p \nmid (2), \mathfrak p \vert m} \left(  \displaystyle\sum_{i=0}^{ord_{\mathfrak p}(m)} N_{K/ \mathbb Q}(\mathfrak p)^{-i} \right)  \right) .$$
Using Magma \cite{Magma}, we discover that the corresponding cusp space of Hilbert modular forms is $0$-dimensional. Therefore, for all locally represented $m \in \mathcal O_K^+$, $r_Q(m)=a_E(m)$. In particular, this means that an exact formula for $r_Q(m)$ can be provided and it is $$r_Q(m) = 8 \left( \displaystyle\sum_{0 \neq(d) \vert m}N(d) \right) - 6 \left( \displaystyle\sum_{(2) \vert (d) \vert m}N(d) \right) + 4 \left( \displaystyle\sum_{(4) \vert (d) \vert m}N(d)\right).$$

\subsection{$K= \mathbb Q (\sqrt{D})$, $D \equiv 1 \pmod{4}$}

Let $D \equiv 1 \pmod{4}$ be a squarefree positive integer. We have 
$$a_E(m) = 6D N_{K/ \mathbb Q}(m) \left( \displaystyle\sum_{a=1}^D \chi_D(a) a(a-D) \right)^{-1} \left( \displaystyle\prod_{\mathfrak p \vert (2)} \dfrac{\beta_{\mathfrak p}(m) N_{K/ \mathbb Q}(\mathfrak p)^2}{N_{K/ \mathbb Q}(\mathfrak p_2)^2-1} \right) \left( \displaystyle\prod_{\mathfrak p \nmid (2), \mathfrak p \vert m} \left( \displaystyle\sum_{i=0}^{ord_{\mathfrak p}(m)} N_{K/ \mathbb Q} (\mathfrak p)^{-i} \right) \right)$$
and as $\chi_D(-a)= \chi_D(a)$, we in fact have
$$a_E(m) = 3D N_{K/ \mathbb Q}(m) \left( \displaystyle\sum_{a=1}^{(D-1)/2} \chi_D(a) a(a-D) \right)^{-1} \left( \displaystyle\prod_{\mathfrak p \vert (2)} \dfrac{\beta_{\mathfrak p}(m) N_{K/ \mathbb Q}(\mathfrak p)^2}{N_{K/ \mathbb Q}(\mathfrak p_2)^2-1} \right) \left( \displaystyle\prod_{\mathfrak p \nmid (2), \mathfrak p \vert m} \left( \displaystyle\sum_{i=0}^{ord_{\mathfrak p}(m)} N_{K/ \mathbb Q} (\mathfrak p)^{-i} \right) \right).$$
Before proceeding to special cases, we mention a general bounding lemma:
\begin{lemma}
$ \left( \displaystyle\sum_{a=1}^{(D-1)/2} \chi_D(a) a(a-D)\right) \equiv 0 \pmod{2}$ and if $D >5$, $ \left( \displaystyle\sum_{a=1}^{(D-1)/2} \chi_D(a) a(a-D)\right) \equiv 0 \pmod{D}$. 
\end{lemma}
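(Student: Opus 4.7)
The plan is to handle the two divisibility statements separately; the modulo $D$ claim carries almost all of the work.

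For the parity statement, since $D \equiv 1 \pmod 4$ is odd, for every integer $a$ exactly one of $a$ and $a-D$ is even, so each term $\chi_D(a) \cdot a(a-D)$ lies in $2\mathbb Z$ and therefore so does the sum. This step is essentially immediate.

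For the modulo $D$ claim, first reduce via $a(a-D) \equiv a^2 \pmod D$ to proving $T := \sum_{a=1}^{(D-1)/2} \chi_D(a)\, a^2 \equiv 0 \pmod D$. Because $K=\mathbb Q(\sqrt D)$ is real we have $\chi_D(-1)=1$, and pairing $a$ with $D-a$ (noting $\chi_D(D-a)=\chi_D(a)$ and $(D-a)^2 \equiv a^2 \pmod D$) gives
\[
2T \equiv U := \sum_{a=1}^{D-1} \chi_D(a)\, a^2 \pmod D.
\]
Since $D$ is odd, it suffices to show $U \equiv 0 \pmod D$. The main tool is a twisting identity: for each unit $c \in (\mathbb Z/D)^\times$, the substitution $a \mapsto ca \pmod D$ permutes residues and yields $U \equiv \chi_D(c)c^2\, U \pmod D$, so $(\chi_D(c)c^2 - 1)\,U \equiv 0 \pmod D$. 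The problem thus reduces to exhibiting, for each rational prime $p \mid D$, a unit $c$ coprime to $D$ with $\chi_D(c)c^2 \not\equiv 1 \pmod p$; such witnesses collectively force $U \equiv 0 \pmod D$ because the multipliers $\chi_D(c)c^2 - 1$ then generate an ideal coprime to $D$.

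The main obstacle, and the reason the hypothesis $D>5$ is required, is the prime $p=5$: Fermat's little theorem gives $\chi_5(c)c^2 \equiv c^{(5-1)/2 + 2} = c^4 \equiv 1 \pmod 5$ for every unit $c$, so the twisting identity produces no nontrivial relation modulo $5$ when $D=5$. For $D>5$ with $5 \mid D$, one rescues the argument by exploiting the composite structure: $\chi_D(c) c^2 \equiv \chi_{D/5}(c) \pmod 5$, and since $\chi_{D/5}$ is a nontrivial quadratic character on $(\mathbb Z/(D/5))^\times$, the Chinese Remainder Theorem supplies a unit $c$ coprime to $D$ with $\chi_{D/5}(c) = -1$, giving $\chi_D(c)c^2 \equiv -1 \not\equiv 1 \pmod 5$. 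For each other prime $p \mid D$ the map $c \mapsto \chi_p(c)c^2$ on $(\mathbb Z/p)^\times$ is given by $c \mapsto c^{(p-1)/2 + 2}$ and is not identically $1$ (one checks that this would force $p \leq 5$); a direct CRT choice of $c$ (trivial modulo the other prime factors of $D$, and a suitable nonresidue modulo $p$) furnishes the required witness. Piecing these local witnesses together completes the proof.
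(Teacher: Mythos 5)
Your proof is correct and follows essentially the same route as the paper: reduce to $\sum_a \chi_D(a)a^2 \pmod D$, apply the substitution $a \mapsto ca$ to get $(\chi_D(c)c^2-1)\sum_a \chi_D(a)a^2 \equiv 0 \pmod D$, and observe that a witness $c$ with $\chi_D(c)c^2 \not\equiv 1 \pmod p$ exists for every $p \mid D$ exactly when $D>5$, the prime $p=5$ being the delicate case handled via $\chi_D(c)c^2 \equiv \chi_{D/5}(c) \pmod 5$. Your write-up is in fact somewhat more careful than the paper's (one witness per prime combined by CRT, versus the paper's single simultaneous $q$), and the only loose point is the parenthetical claim that the map $c \mapsto c^{(p-1)/2+2}$ being identically $1$ forces $p \le 5$ --- for $p=3$ this map is $c \mapsto c$, which is visibly not identically $1$, so the case is covered, but your stated justification does not literally exclude it.
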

\begin{proof}
That the value is even is obvious. That it is divisible by $D$ for $D>5$ is not. First consider $D \equiv 1 \pmod{4}$ prime. Suppose $1 \neq q \in (\mathbb Z /D \mathbb Z)$ is a primitive root. Then


\begin{eqnarray*}
\chi_D(q) q^2 \displaystyle\sum_{a=0}^{D-1} \chi_D(a) a^2 & \equiv & \displaystyle\sum_{a=0}^{D-1}\chi_D(qa) (qa)^2 \pmod{D} \\
& \equiv & \displaystyle\sum_{b=0}^{D-1}\chi_D(b) b^2 \pmod{D}
\end{eqnarray*}
and
\begin{eqnarray*}
\underbrace{\left( \chi_D(q) q^2 - 1\right)}_{\ast} \displaystyle\sum_{a=0}^{D-1}\chi_D(a) a^2 & \equiv & 0 \pmod{D}.
\end{eqnarray*}
If $\ast \equiv 0 \pmod{D}$ then $\chi_D(q) q^2 \equiv 1 \pmod{D}$, and as $D>5$ then we have a contradiction. So since $\ast \not\equiv 0 \pmod{D}$, $2 \displaystyle\sum_{a=0}^{(D-1)/2} \chi_D(a) a^2  \equiv  \displaystyle\sum_{a=0}^{D-1} \chi_D(a) a^2 \equiv 0 \pmod{D}$ and the claim holds for $D$ prime.\\
\\
The composite case is quite similar. Instead of setting $q$ to be a primitive root, one selects $1 \neq q \in (\mathbb Z / D \mathbb Z)^{\times}$ such that for all $p \vert D$, $\chi_D(1) q^2 \not\equiv 1 \pmod{p}$. For any prime $p \neq 5$ that such a $q$ can be chosen is obvious. As for $p=5$, note that if $\chi_D(q) q^2 \equiv 1 \pmod{5}$ for all $q \in (\mathbb Z/ D \mathbb Z)^{\times}$ then $\chi_D(q)\equiv q^2 \pmod{5}$, which implies that $\chi_D(q)=\chi_5(q)$. As $D>5$, this is impossible.

\end{proof}

\subsubsection{$D \equiv 5 \pmod{8}$}
Here $(2)$ is inert, so
$$a_E(m) = 2D \cdot \dfrac{8}{5} \cdot N_{K/ \mathbb Q}(m) \left( \displaystyle\sum_{a=1}^{(D-1)/2} \chi_D(a) a(a-D) \right)^{-1} \beta_{\mathfrak (2)}(m) \left( \displaystyle\prod_{\mathfrak p \nmid (2), \mathfrak p \vert m} \left( \displaystyle\sum_{i=0}^{ord_{\mathfrak p}(m)} N_{K/ \mathbb Q} (\mathfrak p)^{-i} \right) \right)$$
\begin{lemma}
$$\beta_{(2)}(m) = \begin{cases}1, & (2) \nmid (m) \\ \dfrac{15}{8} \left(\displaystyle\sum_{i=0}^{N-1} \dfrac{1}{4^{2i}}  \right) + \dfrac{3}{4^{2N+1}}, &  \operatorname{ord}_{(2)}(m) = 2N+1, N \geq 0 \\ \dfrac{15}{8} \left( \displaystyle\sum_{i=0}^{N-2} \dfrac{1}{4^{2i}} \right)+\dfrac{27}{4^{2N}},  & \operatorname{ord}_{(2)}(m) =2N, N >0. \end{cases}$$
\end{lemma}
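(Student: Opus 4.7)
The plan is to follow the template of the previous theorem (for $D\equiv 2\pmod 4$), adapting it to the inert setting. Since $(2)$ is inert in $\mathcal O_K$, we have $\mathcal O_K/(2)\cong\mathbb F_4$ and $N_{K/\mathbb Q}((2))=4$. Hanke's stabilization lemma applies at level $k\geq 2\operatorname{ord}_{(2)}(2)+1=3$, giving $r^{\operatorname{Good}}_{(2)^{3+\ell}}(m)=4^{3\ell}\,r^{\operatorname{Good}}_{(2)^3}(m)$, so the whole problem reduces to a finite computation of $r^{\operatorname{Good}}_{(2)^3}(m)$ for each residue class of $m$ modulo $(2)^3$.

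First I would dispose of $(2)\nmid m$: no Zero-type solution can arise (as any such would force $Q(\vec x)\equiv 0\pmod{(4)}$, incompatible with $\operatorname{ord}_{(2)}(m)=0$), so $\beta_{(2)}(m)=\beta^{\operatorname{Good}}_{(2)}(m)=r^{\operatorname{Good}}_{(2)^3}(m)/4^9$. The characteristic-$2$ identity $Q(\vec x)\equiv(x_1+x_2+x_3+x_4)^2\pmod{(2)}$, together with the bijectivity of Frobenius on $\mathbb F_4$, makes $r_{(2)}(m)=64$ for every $m$; lifting and counting (or, as in the ramified case, invoking Sage/Magma directly) will give $r^{\operatorname{Good}}_{(2)^3}(m)=4^9$ and hence $\beta_{(2)}(m)=1$.

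For $\operatorname{ord}_{(2)}(m)\geq 1$, I would split $\beta_{(2)}(m)=\beta^{\operatorname{Good}}_{(2)}(m)+\beta^{\operatorname{Zero}}_{(2)}(m)$ and apply the Zero-map surjection of multiplicity $N((2))^4=256$ to derive the recursion
$$\beta^{\operatorname{Zero}}_{(2)}(m)\;=\;\tfrac{1}{16}\,\beta_{(2)}\!\left(m/(2)^2\right).$$
Iterating telescopes either down to the unit base case (for even orders, $\beta=1$) or down to the order-$1$ base case (for odd orders), producing a geometric sum in $1/16$. The only remaining ingredients are the Good-type base values, which a finite enumeration at level $(2)^3$ is expected to give as $3/4$ at order $1$, $13/8$ at order $2$, and the stable value $15/8$ at all orders $\geq 3$; substituting these into the telescoped recursion reproduces the claimed piecewise formula.

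The main obstacle is the finite enumeration of $r^{\operatorname{Good}}_{(2)^3}(m)$ for small orders: one must verify that this count depends only on $\operatorname{ord}_{(2)}(m)$ (and not on finer residue data of $m$), and then compute it in each of the three subcases. This is the direct analogue of the eight-vector enumeration from the ramified proof, but the residue classes in play over $\mathbb F_4$ are more numerous, so I would lean on Sage or Magma to dispatch the casework cleanly.
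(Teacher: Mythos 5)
Your proposal is correct and follows essentially the same route as the paper: stabilization of Good-type counts at level $(2)^3$, the multiplicity-$N((2))^4$ Zero-type reduction (which you package as the recursion $\beta^{\operatorname{Zero}}_{(2)}(m)=\tfrac{1}{16}\beta_{(2)}(m/4)$ rather than unwinding the sum inside the limit), and a finite enumeration for the base values. Your Good-type densities $1$, $3/4$, $13/8$, $15/8$ at orders $0$, $1$, $2$, $\geq 3$ agree exactly with the counts $2^{18}$, $2^{16}\cdot 3$, $2^{15}\cdot 13$, $2^{15}\cdot 15$ used (via Sage) in the paper, and the telescoped sums reproduce the stated formulas.
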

\begin{proof}
When $(2) \nmid (m)$, all solutions are of Good type and we have:
\begin{eqnarray*}
\beta_{(2)}(m) & = & 
\displaystyle\lim_{v \to \infty} \dfrac{r_{(2)^v}^{\operatorname{Good}}(m)}{4^{3v}} \\
& = &\displaystyle\lim_{v \to \infty} \dfrac{4^{3(v-3)}r_{(8)}^{\operatorname{Good}}(m)}{4^{3v}}\\
& = & \dfrac{262144}{2^{18}}=1.
\end{eqnarray*}
For $\operatorname{ord}_2(m)=2N+1$, $N \geq 0$, we have Good and Zero type solutions with
\begin{eqnarray*}
\beta_{(2)}(m) & = & \displaystyle\lim_{v \to \infty} \dfrac{r_{(2)^v}^{\operatorname{Good}}(m)}{4^{3v}} + \displaystyle\lim_{v \to \infty} \dfrac{r_{(2)^v}^{\operatorname{Zero}}(m)}{4^{3v}}\\
 & = & \displaystyle\lim_{v \to \infty }\dfrac{4^{3(v-3)}r_{(8)}^{\operatorname{Good}}(m) }{4^{3v}} + \displaystyle\lim_{v \to \infty} \dfrac{1}{4^{3v}}\left( \sum_{i=1}^{N}4^{4i} r_{(2)^{v-2i}}^{\operatorname{Good}} \left( \dfrac{m}{2^{2i}} \right) \right)\\
 & = & \dfrac{15}{8} + \dfrac{\sum_{i=1}^{N-1} 2^{15} \cdot 3 \cdot 5}{2^{18} \cdot 4^{2i}} + \dfrac{2^{16} \cdot 3}{4^9\cdot 4^{2N}} = \dfrac{15}{8} \left( \displaystyle\sum_{i=0}^{N-1} \dfrac{1}{4^{2i}} \right) + \dfrac{3}{4^{2N+1}}.
\end{eqnarray*}
Last, when $\operatorname{ord}_2(m)=2N$, $N \geq 1$, we have Good and Zero type solutions, and
\begin{eqnarray*}
\beta_{(2)}(m)  &=&  \displaystyle\lim_{v \to \infty} \dfrac{r_{(2)^v}^{\operatorname{Good}}(m)}{4^{3v}} + \displaystyle\lim_{v \to \infty} \dfrac{r_{(2)^v}^{\operatorname{Zero}}(m)}{4^{3v}}\\
& = & \dfrac{15}{8} + \dfrac{1}{4^9} \left( \sum_{i=1}^{N}4^{-2i}r_{(8)}^{\operatorname{Good}} \left( \dfrac{m}{2^{2i}} \right) \right)\\
& = & \dfrac{15}{8} + \displaystyle\sum_{i=1}^{N-2} \dfrac{r_{(8)}^{\operatorname{Good}} \left( \dfrac{m}{2^{2i}} \right)}{4^{2i+9}} + \dfrac{r_{(8)}^{\operatorname{Good}} \left( \dfrac{m}{2^{2(N-1)}} \right)}{4^{9+2(N-1)}} + \dfrac{r_{(8)}^{\operatorname{Good}} \left( \dfrac{m}{2^{2N}} \right)}{4^{2N+9}}\\
& = & \dfrac{15}{8} + \dfrac{15}{8} \left( \sum_{i=1}^{N-2} 4^{-2i} \right) + \dfrac{2^{15} \cdot 13}{2^{18}\cdot 4^{2(N-1)}} + \dfrac{2^{18}}{2^{18}\cdot 4^{2N}}
= \dfrac{15}{8} \left( \displaystyle\sum_{i=0}^{N-2} \dfrac{1}{4^{2i}} \right) + \dfrac{27}{4^{2N}}.
\end{eqnarray*}
\end{proof}

\begin{lemma}
$a_E(m) \geq \dfrac{192}{5D^{3/2}}N_{K/ \mathbb Q}(m) \beta_{(2)}(m) \left( \displaystyle\prod_{\mathfrak p \nmid (2), \mathfrak p \vert m} \left( \displaystyle\sum_{i=0}^{ord_{\mathfrak p}(m)} N(\mathfrak p)^{-i} \right)  \right)$.
\end{lemma}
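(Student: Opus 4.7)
The plan is to reduce the inequality to a one-variable character-sum estimate, then control that estimate via an $L$-value. Writing $S := \sum_{a=1}^{(D-1)/2}\chi_D(a)\,a(a-D)$, the formula derived just before the lemma already yields
\[
a_E(m) \;=\; \frac{16D}{5\,S}\,N_{K/\mathbb{Q}}(m)\,\beta_{(2)}(m)\!\!\prod_{\substack{\mathfrak p \vert m\\ \mathfrak p\nmid(2)}}\!\!\Big(\sum_{i=0}^{\operatorname{ord}_{\mathfrak p}(m)}\!N(\mathfrak p)^{-i}\Big).
\]
Thus the claim is equivalent to the inequality $S \leq D^{5/2}/12$, after which the constant $\tfrac{16D}{5}\cdot\tfrac{12}{D^{5/2}} = \tfrac{192}{5D^{3/2}}$ drops out immediately.

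To bound $S$, I would translate it into a special value of $L_\mathbb{Q}(\,\cdot\,,\chi_D)$. Since $\chi_D(-1)=1$ for a real quadratic character of discriminant $D$, the identity $\sum_{a=1}^{D-1}\chi_D(a)\,a = 0$ holds, so $\sum_{a=1}^{D-1}\chi_D(a)\,a(a-D) = \sum_{a=1}^{D-1}\chi_D(a)\,a^2$. The symmetry $\chi_D(D-a) = \chi_D(a)$ that folds the full sum down to the half-range gives $\sum_{a=1}^{D-1}\chi_D(a)\,a^2 = 2S$. Matching this with the generalized Bernoulli formula of the excerpt produces $B_{2,\chi_D} = 2S/D$.

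Now apply the functional equation~(2) at $s=2$ with $\delta = 0$, using $\Gamma(2) = 1$, $\cos(\pi) = -1$, and $\tau(\chi_D) = \sqrt{D}$ (Gauss sum of a primitive real even character), together with $L_\mathbb{Q}(-1,\chi_D) = -B_{2,\chi_D}/2$. After the dust settles, the two minus signs cancel and one obtains
\[
L_\mathbb{Q}(2,\chi_D) \;=\; \frac{\pi^2\,\tau(\chi_D)\,B_{2,\chi_D}}{D^2} \;=\; \frac{2\pi^2\,S}{D^{5/2}}.
\]
The trivial majorant $L_\mathbb{Q}(2,\chi_D) = \sum_{n\geq 1}\chi_D(n)/n^2 \leq \zeta(2) = \pi^2/6$ then forces $S \leq D^{5/2}/12$, which is exactly what is needed.

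The only delicate step is bookkeeping through the functional equation: one must keep track of the signs in $\cos(\pi) = -1$, in $L(-1,\chi_D) = -B_{2,\chi_D}/2$, and of the fact that for a real primitive even character one uses $\tau(\chi_D) = +\sqrt{D}$ rather than $i\sqrt{D}$. A useful sanity check is that $L_\mathbb{Q}(2,\chi_D) > 0$, which forces $S > 0$, consistent with $a_E(m) > 0$ in the boxed formula. (An equivalent cleaner route avoids the functional equation altogether: since $\zeta_K(2) = \zeta_\mathbb{Q}(2)\,L_\mathbb{Q}(2,\chi_D)$ and $\zeta_K(2)$ equals the explicit closed form $\tfrac{\pi^4}{6D}\cdot|\tau(\chi_D)|\cdot B_{2,\chi_D}$ from earlier in the excerpt, the same identification $L_\mathbb{Q}(2,\chi_D) = 2\pi^2 S/D^{5/2}$ falls out directly.)
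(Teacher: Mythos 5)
Your proof is correct and rests on exactly the same idea as the paper's: the trivial bound $L_{\mathbb Q}(2,\chi_D)\le\zeta(2)=\pi^2/6$, which is equivalent to the bound $L_K(2,\chi)=\zeta(2)L_{\mathbb Q}(2,\chi_D)\le\pi^4/36$ that the paper invokes. You simply enter through the character-sum formula and run the functional equation backwards to recover the $L$-value, whereas the paper applies the bound directly in Siegel's product and folds in the local factor $N((2))^2/(N((2))^2-1)=16/15$ at the inert prime $(2)$; the resulting constant $36\cdot\tfrac{16}{15}=\tfrac{192}{5}$ is the same either way.
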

\begin{proof}
This is based on a rather trivial bound that $L_{K}(2, \chi) \leq \dfrac{\pi^4}{36}$, and the fact that here $(2)$ is inert.
\end{proof}

In particular, this means that for $m$ odd we have: $$ \dfrac{192}{5D^{3/2}} \left(\displaystyle\sum_{0<d \vert m}N(d) \right) \leq a_E(m) \leq \dfrac{8}{5} \left(\displaystyle\sum_{0<d \vert m}N(d) \right) .$$

\subsubsection{Concrete Example:} $K = \mathbb Q (\sqrt{13})$. 
We see immediately that $$a_E(m) = \dfrac{8}{5} N_{K/ \mathbb Q}(m) \beta_{(2)}(m) \left( \displaystyle\prod_{\mathfrak p \nmid (2), \mathfrak p \vert m} \left(\displaystyle\sum_{i=0}^{ord_{\mathfrak p}(m)} N_{K/ \mathbb Q} (\mathfrak p)^{-i} \right) \right)$$ where the $(2)$-adic local density was classified above. Note that this gives $a_E(1)= \tfrac{8}{5}$, showing the constant in the upper bound given above is sharp. Using Magma \cite{Magma} we find that while the dimension of the space of Hilbert modular cusp forms of parallel weight $2$ and level $(4)$ is two, there are no newforms. Oldforms in the Hilbert setting are a very direct generalization of oldforms in the classical setting. Noting that the dimensions of the spaces of Hilbert modular cusp forms of level $(2)$ and $(1)$ respectively are one and zero, we see that the forms of level $(4)$ are generated by the one eigenform of level $(2)$ (which we call $f$) and by $f\vert V(2)$. Again, with the aid of Magma, one then can show that $$\Theta_Q(z) = E(z) + \dfrac{32}{5} f(z) + \dfrac{128}{5} f\vert V(2) (z).$$

\subsubsection{{$D \equiv 1 \pmod{8}$}}
Again, we begin with the local density at $(2)= \mathfrak p_1 \mathfrak p_2$. 
\begin{lemma}
$$\beta_{\mathfrak p_i}(m) = \begin{cases} 1, & \mathfrak p_i \nmid m \\ \dfrac{3}{2^{2N+1}}, & ord_{\mathfrak p_i}(m)=2N+1, N \geq 0 \\ \dfrac{3}{2^{2N}}, & ord_{\mathfrak p_i}(m) =2N, N \geq 1 \end{cases}$$
\end{lemma}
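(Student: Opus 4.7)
Since $D \equiv 1 \pmod 8$, the rational prime $2$ splits completely as $(2) = \mathfrak p_1 \mathfrak p_2$ with residue field $\mathbb F_2$ and $N_{K/\mathbb Q}(\mathfrak p_i) = 2$; in particular each completion $\mathcal O_{K,\mathfrak p_i}$ is isomorphic to $\mathbb Z_2$, so $\beta_{\mathfrak p_i}(m)$ coincides with the classical $2$-adic local density for $x_1^2 + x_2^2 + x_3^2 + x_4^2$ and depends only on $v := \operatorname{ord}_{\mathfrak p_i}(m)$. The plan is to compute it along exactly the lines of the earlier $d \equiv 2 \pmod 4$ theorem: first assemble Good-type counts at level $\mathfrak p_i^3$, then telescope the Zero-type contributions via $\pi_Z$.

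Because $2\operatorname{ord}_{\mathfrak p_i}(2) + 1 = 3$, Hanke's lifting theorem applied at $k=3$ gives $r^{\operatorname{Good}}_{\mathfrak p_i^v}(m) = 2^{3(v-3)}\, r^{\operatorname{Good}}_{\mathfrak p_i^3}(m)$ for all $v \geq 3$, so the Good density contribution at any residue class of $m$ equals $r^{\operatorname{Good}}_{\mathfrak p_i^3}(m)/2^9$. I would enumerate $4$-tuples in $(\mathbb Z/8)^4$ with $\sum x_j^2 \equiv m \pmod 8$ and at least one $x_j$ odd (the Good-type condition, since all diagonal valuations vanish for $Q$ the sum of four squares). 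Sorting the multinomial partitions $(a,b,c)$ of the four summands among the square residues $\{0,1,4\}$ in $\mathbb Z/8$ (occurring with multiplicities $2,4,2$) by $b + 4c \bmod 8$ yields
\begin{equation*}
r^{\operatorname{Good}}_{\mathfrak p_i^3}(m) \;=\; 2^9,\; 3 \cdot 2^8,\; 2^8,\; 0 \qquad \text{for } m \equiv u,\; 2u,\; 4u,\; 0 \pmod{\mathfrak p_i^3},
\end{equation*}
where $u$ denotes an arbitrary unit.

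Iterating $\pi_Z$ (surjective with multiplicity $N(\mathfrak p_i)^4 = 16$) then gives
\begin{equation*}
\beta_{\mathfrak p_i}(m) \;=\; \sum_{j=0}^{\lfloor v/2 \rfloor} 2^{-2j - 9}\, r^{\operatorname{Good}}_{\mathfrak p_i^3}\!\bigl(m/\mathfrak p_i^{2j}\bigr).
\end{equation*}
For $v = 0$ only $j=0$ survives and $\beta_{\mathfrak p_i}(m) = 1$. For $v = 2N+1$ with $N \geq 0$, the residue of $m/\mathfrak p_i^{2j}$ mod $\mathfrak p_i^3$ is $0$ for $j < N$ (killing those terms) and $2u$ for $j = N$, leaving $\beta_{\mathfrak p_i}(m) = 2^{-2N-9} \cdot 3 \cdot 2^8 = 3/2^{2N+1}$. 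For $v = 2N$ with $N \geq 1$, only $j = N-1$ (residue $4u$) and $j = N$ (unit) contribute, giving $\beta_{\mathfrak p_i}(m) = 2^{-2N+1} + 2^{-2N} = 3/2^{2N}$. Each case matches the statement.

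The main obstacle is the mod-$8$ enumeration in the first step --- in particular, keeping track of which partitions $(a,b,c)$ hit each residue of $m \pmod 8$ subject to the Good-type constraint $b \geq 1$. Once those four counts are in hand the telescoping is purely mechanical, and the result collapses into the single closed form $\beta_{\mathfrak p_i}(m) = 3/2^v$ for $v \geq 1$, identical to the classical $2$-adic density for four squares over $\mathbb Z$, as expected from $\mathcal O_{K, \mathfrak p_i} \cong \mathbb Z_2$.
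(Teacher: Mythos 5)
Your proposal is correct and follows essentially the same route as the paper: both reduce to Good-type counts at level $\mathfrak p_i^3$ via Hanke's lifting theorem (noting $2\operatorname{ord}_{\mathfrak p_i}(2)+1=3$) and then telescope the Zero-type contributions through $\pi_Z$ with multiplicity $N(\mathfrak p_i)^4$. Your mod-$8$ Good-type counts ($2^9$, $3\cdot 2^8$, $2^8$, $0$ for $m\equiv u,\,2u,\,4u,\,0$) are exactly the values the paper's computation relies on, and your single closed summation formula is simply a cleaner packaging of the paper's case-by-case limit evaluations.
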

\begin{proof}
We note that $\left( \mathcal O_K/ \mathfrak p_i \mathcal O_K \right) \cong \mathbb F_2$. We begin by outlining a particular case of $\mathfrak p_i \nmid m$, noting that the other such cases behave identically. Then
\begin{itemize}
\item $r_{\mathfrak p_i}(1) = 2^3$. This again can be verified quickly using Sage \cite{SAGE} with data shown below:
\begin{center}
\begin{tabular}{cccc}
$(1,0,0,0)$ & $(0,1,0,0)$ & $(0,0,1,0)$ & $(0,0,0,1)$\\
$(1,1,1,0)$ & $(1,1,0,1)$ & $(1,0,1,1)$ & $(0,1,1,1)$
\end{tabular}
\end{center}
\item We again consider the perfect squares $\pmod{\mathfrak p_i^3}$ and their reductions $\pmod{\mathfrak p^i}$:
\begin{eqnarray*}
0 & \mapsto& 0, 4 \\
1 & \mapsto& 1
\end{eqnarray*}
We note again that each of the two classes has $4$ lifts, equally distributed (i.e., two elements square to $4$, and four elements square to $1$).
\item Again, we are reduced to a counting argument. We take into consideration the four vectors that lift:
\begin{center}
\begin{tabular}{cccc}
$(1,0,0,0)$ & $(0,1,0,0)$ & $(0,0,1,0)$ & $(0,0,0,1)$
\end{tabular}
\end{center}
Each vector has a total of $2^7$ lifts, bringing the grand total to $2^2 \cdot 2^7 = 2^9$.
\end{itemize}
For $ord_{\mathfrak p_i}(m) = 2N+1$, $N \geq 0$, we have Good and Zero type solutions with
\begin{eqnarray*}
\beta_{\mathfrak p_i}(m) & = & \displaystyle\lim_{\nu \to \infty} \dfrac{r_{\mathfrak p_i^\nu}^{Good}(m)}{2^{3 \nu}} + \displaystyle\lim_{\nu \to \infty} \dfrac{r_{\mathfrak p_i^\nu}^{Zero}(m)}{2^{3 \nu}}\\
& = & 0 + \displaystyle\lim_{\nu \to \infty} \dfrac{1}{2^{3 \nu}} \left( \displaystyle\sum_{i=1}^{N}2^{4i}r_{\mathfrak p_i^{v-2i}}^{Good}(m/\mathfrak p_i^2) \right)\\
& = & 2^{4N} \displaystyle\lim_{\nu \to \infty} \dfrac{2^{3(\nu-2N-3)}r_{\mathfrak p_i^3}(m/\mathfrak p_i^{2N}) }{2^{3\nu}}\\
& = & \dfrac{2^8 \cdot 3}{2^9 \cdot 2^{2N}} = \dfrac{3}{2^{2N+1}}.
\end{eqnarray*}
Last, suppose $ord_{\mathfrak p_i}(m)=2N$, $N \geq 1$. We have Good and Zero type solutions with
\begin{eqnarray*}
\beta_{\mathfrak p_i}(m) & = & \displaystyle\lim_{\nu \to \infty} \dfrac{r_{\mathfrak p_i^\nu}^{Good}(m)}{2^{3 \nu}} + \displaystyle\lim_{\nu \to \infty} \dfrac{r_{\mathfrak p_i^\nu}^{Zero}(m)}{2^{3 \nu}}\\
& = & 0 + \displaystyle\lim_{\nu \to \infty} \dfrac{1}{2^{3 \nu}} \left( \displaystyle\sum_{i=1}^{N}2^{4i}r_{\mathfrak p_i^{v-2i}}^{Good}(m/\mathfrak p_i^2) \right)\\
& = & \displaystyle\lim_{\nu \to \infty} \dfrac{2^{4(N-1)}2^{3(v-2(N-1)-3)}r_{\mathfrak p_i^{3}}^{Good}(m/2^{2(N-1)}) }{2^{3 \nu}} + \displaystyle\lim_{\nu \to \infty} \dfrac{2^{4N}2^{3(v-2N-3)}r_{\mathfrak p_i^{3}}^{Good}(m/2^{2N}) }{2^{3 \nu}}\\
& = & \dfrac{r_{\mathfrak p_i^3}^{Good}(m/2^{2(N-1)})}{2^92^{2(N-1)}} + \dfrac{r_{\mathfrak p_i^3}^{Good}(m/2^{2N})}{2^92^{2N}}\\
& = & \dfrac{2^8}{2^92^{2(N-1)}} + \dfrac{2^9}{2^92^{2N}} = \dfrac{3}{2^{2N}}.
\end{eqnarray*}
\end{proof}
Starting with 
\begin{eqnarray*}
a_E(m) & = & 3D N(m) \left( \displaystyle\sum_{a=1}^{(D-1)/2} \chi_D(a) a(a-D)\right)^{-1} \left( \displaystyle\prod_{\mathfrak p \vert (2)} \dfrac {\beta_{\mathfrak p}(m) N(\mathfrak p)^2}{N(\mathfrak p)^2-1} \right) \left( \displaystyle\prod_{\mathfrak p \vert m, \mathfrak \nmid (2)} \displaystyle\sum_{i=0}^{ord_{\mathfrak p}(m)} N(\mathfrak p)^{-i} \right) 
\end{eqnarray*}

\subsubsection{Concrete Example} $K= \mathbb Q (\sqrt{17})$. The corresponding space of Hilbert modular cusp forms is $5$-dimensional. There is one dimension of newforms at $(4)$ called $f_4$, one dimension at $(2)$ $f_2$, which gives the entire space, denoted by $f_{2,2}, f_{2, \mathfrak p_1}, f_{2, \mathfrak p_2}$. So note that $$\mathcal S_Q(z) = c_4 f_4(z)+c_2f_2(z)+c_{2,2}f_{2,2}(z)+c_{2, \mathfrak p_1}f_{2, \mathfrak p_1}(z)+ c_{2, \mathfrak p_2}f_{2, \mathfrak p_2}(z).$$
Here we have $$a_E(m) = \dfrac{4}{3} N(m) \left( \displaystyle\prod_{\mathfrak p \vert (2)} \beta_{\mathfrak p}(m) \right) \left(\displaystyle\prod_{\mathfrak p \nmid (2), \mathfrak p \vert m} \displaystyle\sum_{i=0}^{ord_{\mathfrak p}(m)} N(\mathfrak p)^{-i} \right).$$
We collect some data:
\begin{center}
\begin{tabular}{|c|c|c|}
\hline
$m$ & $r_Q(m)$ & $a_E(m)$ \\
\hline
$1$ & $8$ & $4/3$ \\
\hline
$2$ & $24$ & $12$ \\
\hline
$2+ (1+\sqrt{17})/2$ & $0$ & $4$\\
\hline
$11+7(1+\sqrt{17})/2$ & $0$ & $4$ \\
\hline
$3$ & $32$ & $40/3$ \\
\hline
$5$ & $48$ & $104/3$ \\
\hline
$6$ & $96$ & $120$\\
\hline
\end{tabular}
\end{center}
This immediately gives $c_2=4/3$ and $c_4=c_{2,2} = 16/3$, $c_{2, \mathfrak p_1} = c_{2, \mathfrak p_2} = -8/3$.
\subsection{$K= \mathbb Q (\sqrt{D})$, $D \equiv 3 \pmod{4}$}

Here $(2)=\mathfrak p_2^2$ ramifies, and so this simplifies to $$a_E(m) = 8D N_{K/ \mathbb Q}(m) \left( \displaystyle\sum_{a=1}^D \chi_D(a) a(a-D) \right)^{-1} \beta_{\mathfrak p_2}(m) \left( \displaystyle\prod_{\mathfrak p \nmid (2), \mathfrak p \vert m} \left( \displaystyle\sum_{i=0}^{ord_{\mathfrak p}(m)} N_{K/ \mathbb Q} (\mathfrak p)^{-i} \right) \right)$$
The locally represented squares mod $\mathfrak p_2^5$ are $0, -2d+1,4d,2d-1,2d,-2d,4d+2,2d+1,1,4d+1,-2d-1,2,4d-1,-1,-2d+2,2d+2$.
\begin{lemma}
Suppose that $m\in \mathcal O_K^{+}$ is locally represented. Then
$$\beta_{\mathfrak p_2} (m) = \begin{cases}2, & \mathfrak p_2 \nmid m, ord_{p_2}(m)=2 \\ \dfrac{9}{4} \left( \dfrac{2^{2(N-1)}-1  }{2^{2(N-1)} -2^{2(N-2)} } \right)+ \dfrac{3}{2^{2(N-1)+1 }}, & ord_{\mathfrak p_2}(m) = 2N+1, N \geq 1 \\ \dfrac{9}{4} \left( \dfrac{2^{2(N-2)}-1}{2^{2(N-2)}-2^{2(N-3)}} \right) + \dfrac{7}{4} \dfrac{1}{2^{2(N-2)}} + \dfrac{1}{2^{2N-3}}, & ord_{\mathfrak p_2}(m)=2N, N \geq 2. \end{cases}$$
\end{lemma}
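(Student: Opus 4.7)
The plan is to mirror almost verbatim the proof given earlier in the $d \equiv 2 \pmod{4}$ subsection, since in both settings $(2) = \mathfrak{p}_2^2$ ramifies and the residue field $\mathcal{O}_K / \mathfrak{p}_2 \mathcal{O}_K$ is isomorphic to $\mathbb{F}_2$. The three-step structure is: (i) enumerate the Good-type vectors $\vec{v} \in \mathbb{F}_2^4$ with $Q(\vec{v}) \equiv m \pmod{\mathfrak{p}_2}$; (ii) count the lifts of each such vector to $(\mathcal{O}_K/\mathfrak{p}_2^5)^4$ whose image under $Q$ lies in the correct residue class $r \in R$ mod $\mathfrak{p}_2^5$; (iii) bootstrap to general $\operatorname{ord}_{\mathfrak{p}_2}(m)$ via the Good-type multiplicativity and the Zero-type surjection $\pi_Z$ of multiplicity $N(\mathfrak{p}_2)^4 = 2^4$ recorded earlier in the excerpt.

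First I would note that the local obstruction at $\mathfrak{p}_2$ is again precisely $\operatorname{ord}_{\mathfrak{p}_2}(m) = 1$; the list $R$ exhausts the locally represented residue classes mod $\mathfrak{p}_2^5$ of even $\mathfrak{p}_2$-order. For $\mathfrak{p}_2 \nmid m$ (and in fact for every representative $r \in R$ coprime to $\mathfrak{p}_2$), one checks with Sage that the eight unit vectors of $\mathbb{F}_2^4$ representing $r$ mod $\mathfrak{p}_2$ each admit the same number of lifts mod $\mathfrak{p}_2^5$, producing $r_{\mathfrak{p}_2^5}^{\operatorname{Good}}(m) = 2 \cdot 2^2 \cdot 2^{13} = 2^{16}$ and hence $\beta_{\mathfrak{p}_2}(m) = 2^{16}/2^{15} = 2$. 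The key computational ingredient is that the set of $\mathfrak{p}_2$-adic squares in $\mathcal{O}_K / \mathfrak{p}_2^5$ has the same cardinality structure as in the $d \equiv 2 \pmod 4$ case — which is forced by the ramification behavior rather than the specific generator $\sqrt{D}$.

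For $\operatorname{ord}_{\mathfrak{p}_2}(m) = 2N+1$ with $N \geq 1$, one splits
\begin{eqnarray*}
\beta_{\mathfrak{p}_2}(m) & = & \lim_{v \to \infty} \frac{r_{\mathfrak{p}_2^v}^{\operatorname{Good}}(m)}{2^{3v}} + \lim_{v \to \infty} \frac{r_{\mathfrak{p}_2^v}^{\operatorname{Zero}}(m)}{2^{3v}},
\end{eqnarray*}
computes the first limit via $r_{\mathfrak{p}_2^5}^{\operatorname{Good}}(m) = 9 \cdot 2^{13}$ using the stabilization at $k \geq 2\operatorname{ord}_{\mathfrak{p}_2}(2)+1 = 5$, and iterates the Zero-to-Good reduction $\pi_Z$ to rewrite the second limit as a telescoping geometric series in $2^{-2i}$, plus a final "boundary" term at $i = N-1$ where the quotient $m/\mathfrak{p}_2^{2(N-1)}$ sits in the $\mathfrak{p}_2$-order $3$ stratum. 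The $\operatorname{ord}_{\mathfrak{p}_2}(m) = 2N$ case is analogous except that the residual $m/\mathfrak{p}_2^{2(N-1)}$ lands in the $\mathfrak{p}_2^2$-stratum, contributing the extra $\tfrac{7}{4} \cdot 2^{-2(N-2)}$ term, and the final residue $m/\mathfrak{p}_2^{2N}$ is a unit contributing $2^{-(2N-3)}$.

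The main obstacle is verifying that the lift count $r_{\mathfrak{p}_2^5}(r)$ is independent of the choice of $r \in R$ (within each unit or each order-$2$ class), so that the geometric sum genuinely telescopes. Because the ring $\mathcal{O}_K / \mathfrak{p}_2^5$ differs between the $D \equiv 3 \pmod 4$ and $D \equiv 2 \pmod 4$ settings (the uniformizer and unit group structure are not literally the same), this uniformity cannot simply be imported; it must be re-checked directly, most efficiently by a short Sage computation. Once that uniformity is established, the algebra recombining the Good and Zero contributions is the same as in the earlier case and yields the three displayed expressions.
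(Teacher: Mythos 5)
Your proposal follows essentially the same route as the paper: reduce to $r_{\mathfrak p_2^5}$ via the Good-type stabilization at $k \geq 2\operatorname{ord}_{\mathfrak p_2}(2)+1 = 5$, verify the lift counts by a direct Sage computation in $\mathcal O_K/\mathfrak p_2^5$, and then iterate the Zero-type reduction $\pi_Z$ (multiplicity $2^4$) to produce the geometric series plus the boundary terms in each parity case. Your explicit caveat that the lift counts must be re-checked rather than imported from the $d \equiv 2 \pmod 4$ case is exactly the role the paper's Sage verification plays, so the argument is correct and matches the paper's proof.
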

\begin{proof}
As in the earlier proofs, when $m$ is odd, then all solutions are of Good type, and via Sage \cite{SAGE} one can determine that
\begin{eqnarray*}
\beta_{\mathfrak p_2}(m) & = & \displaystyle\lim_{\nu \to \infty} \dfrac{r_{\mathfrak p_2^\nu}^G(m)}{2^{3\nu}} \\
& = & \dfrac{r_{\mathfrak p_2^5}^G(m)}{2^{15}} = \dfrac{2^{16}}{2^{15}}.
\end{eqnarray*}
A similar argument holds for $ord_{\mathfrak p_2}(m)=2$. Note, however, that in this case, $m \equiv 2, 4\sqrt{d}+2, -2\sqrt{d}, 2 \sqrt{d}$. In these first two instances, there are only Good-type solutions. In the latter two cases, there are both Good type as well as Zero type solutions. \\
\\
Next suppose $ord_{\mathfrak p_2}(m)=2N+1, N \geq 1$ (Note $N =0$ is not possible, as such $m$ are not locally represented). Then there is the potential for both Zero type and Good type solutions, and
\begin{eqnarray*}
\beta_{\mathfrak p_2}(m) & = & \displaystyle\lim_{\nu \to \infty} \dfrac{r_{\mathfrak p_2^{\nu}}^G(m) }{2^{3\nu}} + \displaystyle\lim_{\nu \to \infty} \dfrac{r_{\mathfrak p_2^{\nu}}^G(m) }{2^{3\nu}}\\
& = & \dfrac{2^{13} \cdot 9}{2^{15}} + \displaystyle\lim_{\nu \to \infty} \left( \displaystyle\sum_{i=1}^{N-2} 2^{4i}2^{3(\nu-2i-5)}r_{\mathfrak p_2^5}^G(m/\mathfrak p_2^{2i})\right) + \displaystyle\lim_{\nu \to \infty} \dfrac{1}{2^{3v}} \left( 2^{4(N-1)}2^{3(v-2(N-1)-5)}r_{\mathfrak p_2^5}(m/\mathfrak p_2^{2N-2}) \right) \\
& = & \dfrac{9}{4} \left( \displaystyle\sum_{i=0}^{N-2} \dfrac{1}{2^{2i}} \right) + \dfrac{3}{2^{2(N-1)+1}}
\end{eqnarray*}
and the result follows.\\
Last suppose $ord_{\mathfrak p_2}(m)=2N$, $N \geq 2$. Then both Good and Zero type solutions exist with
\begin{eqnarray*}
\beta_{\mathfrak p_2}(m) & = & \displaystyle\lim_{\nu \to \infty} \dfrac{r_{\mathfrak p_2^{\nu}}^G(m) }{2^{3\nu}} + \displaystyle\lim_{\nu \to \infty} \dfrac{r_{\mathfrak p_2^{\nu}}^G(m) }{2^{3\nu}}\\
& = & \dfrac{2^{13}9}{2^{15}} + \displaystyle\lim_{\nu \to \infty} \dfrac{1}{2^{3\nu}} \left( \displaystyle\sum_{i=1}^{N-3}2^{4i}2^{3(\nu-2i-5)}r_{\mathfrak p_2^5}^G(m/ \mathfrak p_2^{2i}) \right)+  \displaystyle\lim_{\nu \to \infty} \dfrac{1}{2^{3\nu}}\left( 2^{4(N-2)}2^{3(\nu-2(N-2)-5)}r_{\mathfrak p_2^5}^G(m/ \mathfrak p_2^{2(N-2)}) \right)\\
& & +  \displaystyle\lim_{\nu \to \infty} \dfrac{1}{2^{3\nu}} \left( 2^{4(N-1)}2^{3(\nu-2(N-1)-5)}r_{\mathfrak p_2^5}(m/ \mathfrak p_2^{2(N-1)})  \right)\\
& = & \dfrac{9}{4} \left( \displaystyle\sum_{i=0}^{N-3} \dfrac{1}{2^{2i}} \right) + \dfrac{7}{4} \cdot \dfrac{1}{2^{2(N-2)}} + \dfrac{1}{2^{2N-3}}
\end{eqnarray*}
and the claim holds. 
\end{proof}

\subsubsection{Concrete Example:} $K = \mathbb Q (\sqrt{3})$ 
We have 
$$a_E(m) = 2N_{K/ \mathbb Q}(m) \beta_{\mathfrak p_2}(m) \left( \displaystyle\prod_{\mathfrak p \nmid (2), \mathfrak p \vert m} \left( \displaystyle\sum_{i=0}^{ord_{\mathfrak p}(m)} N_{K/ \mathbb Q} (\mathfrak p)^{-i} \right) \right).$$ The dimension of the corresponding space of Hilbert modular cusp forms is $1$ (with that form being a newform). As $a_E(1)=a_C(1)=4$, we immediately see that if $f$ is the eigenform we have $\Theta_Q(z) = E(z)+4f(z)$.
\section{Acknowledgements} The tools for this paper were developed in the course of the author's thesis at the University of Georgia. Given that, the author would like to thank her Ph.D. advisers Jon Hanke and Danny Krashen. We are also indebted to Jeremy Rouse and Joe Vandehey for comments and suggestions, and especially to the former in help obtaining software packages. Last, we thank Pete L. Clark and John Voight for reading early drafts of the manuscript. As we were preparing this manuscript for publication we learned of an undergraduate project by John Goes (under Ramin Takloo-Bighash at UIC in 2011) where some of these density computations were performed.

\end{document}